\newtheorem{theorem}{Theorem}[section]
\newtheorem{lemma}[theorem]{Lemma}
\newtheorem*{problem}{Problem}
\theoremstyle{definition}
\newtheorem{remark}{Remark}
\title[Backward Julia sets for a class of p-adic Hénon like maps] 
      {Backward Julia sets for a class of p-adic Hénon like maps}
\author[Jéfferson Bastos, Danilo Caprio, Oyran Raizzaro]{}
\subjclass[2010]{37P40; 11S82; 45M05.}
\keywords{$p$-adic or non-Archimedean dynamical systems, $p$-adic or non-Archimedean Fatou and Julia sets, asymptotic behavior of orbits, stable manifolds.}
\email{jefferson.bastos@unesp.br, danilo.caprio@unesp.br, oyran@uems.br }
\begin{document} 
\maketitle

\centerline{\scshape Jéfferson L. R. Bastos}
\medskip
{\footnotesize
	\centerline{UNESP - Departamento de Matem\'atica do Instituto de Bioci\^encias, Letras e Ci\^encias Exatas.}
	\centerline{São Paulo - Brazil}}

\bigskip

\centerline{\scshape Danilo Caprio}
\medskip
{\footnotesize
	\centerline{UNESP - Departamento de Matem\'atica da Faculdade de Engenharia.}
	\centerline{São Paulo - Brazil}}

\bigskip

\centerline{\scshape Oyran Raizzaro}
\medskip
{\footnotesize
	\centerline{UEMS - Departamento de Matem\'atica da Universidade Estadual de Mato Grosso do Sul.}
	\centerline{Mato Grosso do Sul - Brazil.}

}

\bigskip


\begin{abstract}
In this work we study the backward filled Julia sets of a class of $p$-adic polynomial maps $f:\mathbb{Q}_p^2\longrightarrow \mathbb{Q}_p^2$ defined by $f(x,y)=(xy+c,x)$, where $c\in\mathbb{Q}_p$ is a $p$-adic number. In particular, if $|c|\leq 1$, then we proved that the backward filled Julia set of $f$ is a bounded subset in $\mathbb{Z}_p^2$. On the other hand, if $|c|> 1$, then we prove that the backward filled Julia set of $f$ is an unbounded set and has infinity Haar measure.
\end{abstract}

\section{Introduction}

Let $f:\mathbb{C}\longrightarrow \mathbb{C}$ be a holomorphic map.
The forward filled Julia set associated to $f$ (or simply filled Julia set) is by definition the set
$$\mathcal{K}^+(f)=\{z\in\mathbb{C}: (f^n(z))_{n\geq 0} \textrm{ is bounded}\},$$ where $f^n$
is the $n-$th iterate of $f$. Also, the backward filled Julia set associated to $f$ is by definition the set
$$\mathcal{K}^-(f)=\{z\in\mathbb{C}: f^{-n}(z) \textrm{ there exists for all }n \in \mathbb{Z}_+ \textrm{ and }(f^{-n}(z))_{n\geq 0} \textrm{ is bounded}\}.$$ These definitions can be extended to polynomial maps defined in $\mathbb{C}^d$ for $d\geq 2$.

Filled Julia sets and their boundaries (called Julia set) have many topological and dynamical properties.
These sets were defined independently by Fatou \cite{fatou19,fatou21} and Julia  \cite{julia18,julia22} and they are associated with many areas such as dynamical systems, number theory, topology and functional analysis (see \cite{dv}).

Julia sets in higher dimensions were studied by many mathematicians. For example, an interesting class of Julia sets was  studied in \cite{g}. Precisely, the author considered a family of maps $f_{\alpha,\beta}:\mathbb{C}^2\longrightarrow\mathbb{C}^2$ defined by $(x,y)\mapsto(xy+\alpha,x+\beta)$ with $\alpha,\beta\in\mathbb{C}$ and he proved, among other things, that the family $f_{\alpha,\beta}$ has a measure of the maximal entropy $\frac{1+\sqrt{5}}{2}$.  In \cite{ms}, the authors considered $f_\alpha=f_{\alpha,0}$ and they  extended the Killeen and Taylor stochastic adding machine in base $2$ (see \cite{killentaylor}) to the Fibonacci base and they proved that the spectrum of the transition operator associated with this stochastic adding machine is related to the set $\mathcal{K}^+(f_\alpha)$, where $\alpha\in\mathbb{R}$ is a real value.  The real case $f_{\alpha}:\mathbb{R}^2\longrightarrow\mathbb{R}^2$ was studied in \cite{BCM} (for {$0< \alpha<1/4$}) and \cite{C} (for $-1< \alpha< 0$) where the authors proved that $\mathcal{K}^+(f_\alpha)$ is the union of stable manifolds of the fixed and $3$-periodic points of $f_\alpha$. Also, they proved that $\mathcal{K}^-(f_\alpha)$ is the union of unstable manifolds of the saddle fixed and $3$-periodic points of $f_\alpha$.

A more general class is given in \cite{cmv}, where the authors defined  the stochastic Vershik map related to a stationary ordered Bratteli diagram with incidence matrix $\left(\begin{array}{cc} a & b \\ c & d\end{array} \right)$, where $a,b,c,d\in\mathbb{N}$, and they proved that the spectrum of the transition operator associated with this is connected to the set $\mathcal{K}^+(f_{\alpha,a,b,c,d})$, where $f_{\alpha,a,b,c,d}:\mathbb{C}^2\longrightarrow\mathbb{C}^2$ is a family of maps defined by $(x,y)\mapsto(x^ay^b+\alpha,x^cy^d+\alpha)$ with $\alpha\in\mathbb{R}$.

Another interesting class of Julia sets is associated to the H\'enon maps: let $f:\mathbb{C}^2\longrightarrow \mathbb{C}^2$ defined by
$f(x,y)=(y,P(y)+c-ax)$, for all $(x,y)\in \mathbb{C}^2$, where $P$ is a complex polynomial function and $a,c$ are fixed complex numbers.
Let us observe that H\'enon maps and their dynamics on $\mathbb{R}^2$ and $\mathbb{C}^2$ have been extensively studied, see for instance \cite{bs,BC,fs,hw,H}. For example, in \cite{hw} the authors considered the map $H_a:\mathbb{R}^2\longrightarrow \mathbb{R}^2$ defined by $H_a(x,y)=(y,y^2+ax)$ where $0<a<1$ is given and they proved  that $\mathcal{K}(H_a)=\{\alpha,p\}\cup[W^s(\alpha)\cap W^u(p)]$, where $\alpha=(0,0)$ is the attracting fixed point of $H_a$, $p=(1-a,1-a)$ is the repelling fixed point of $H_a$, $\mathcal{K}(H_a):=\mathcal{K}^+(H_a)\cap \mathcal{K}^-(H_a)$ is the Julia set associated to $H_a$ and $W^s(\alpha)$ and $W^u(p)$ are the stable and unstable manifolds of $\alpha$ and $p$, respectively.

On the other hand, the study of dynamical properties in the context of $p$-adic dynamics is developing and it has applications in many areas, such as physics, cognitive
science, cryptography, biology and geology among others (see for instance \cite{A,KN,KOL,Y}). Hence, Hénon maps were also studied in the context of $p$-adic dynamics (see for instance \cite{ADP,AV,WS}). For example, in  \cite{ADP} the authors studied dynamical and topological properties of the map $H_{a,b}:\mathbb{Q}^2_p\longrightarrow \mathbb{Q}^2_p$ defined by $H_{a,b}(x,y)=(a+by-x^2,x)$, where $a,b\in\mathbb{Q}_p$ with $b\neq 0$. In particular, the authors established basic properties of its one-sided and two-sided filled Julia sets, and they determined, for each pair of parameters $a,b\in\mathbb{Q}_p$, whether these sets are empty or nonempty, whether they are bounded or unbounded, whether they are equal to the unit ball or not, and for
a certain region of the parameter space they showed that the filled Julia set is an attractor. Also, for a certain region of the parameter space, they showed that the Hénon map is topologically conjugate on its filled Julia set to the two-sided shift map on the space of bisequences on two symbols,  which is related to the Smale horseshoe map.

Let $f=f_c:\mathbb{Q}^2_p\longrightarrow\mathbb{Q}^2_p$ be the map defined by $f(x,y)=(xy+c,x)$, with $c\in\mathbb{Q}_p$. In \cite{BCR}, the authors studied the forward  Julia set $\mathcal{K}^+$ defined by $$\mathcal{K}^+=\mathcal{K}^+(f)=\{(x,y)\in\mathbb{Q}_p^2: (f^n(x,y))_{n\geq 0} \textrm{ is bounded}\}.$$ 
In particular, if $ |c|_p\leq 1$, then they have proved that $\mu_2(\mathcal{K}^+)=+\infty$, where $\mu_2$ is the Haar measure defined on the Borel $\sigma$-algebra of $\mathbb{Q}_p^2$ and if $(x,y)\in\mathcal{K}^+$, then there exists a nonnegative integer $k$ such that $f^n(x,y)\in \mathbb{Z}_p^2$ for all $n\geq k$, where $\mathbb{Z}_p$ is the set of $p$-adic integers and $|\cdot|_p$ is the p-adic norm defined on $\mathbb{Q}_p$. Furthermore, if $|c|_p<1$ and $(x,y)\in \mathbb{Z}_p^2$ then $f^n(x,y)$ converges to a fixed point of $f$.  On the other hand, if $|c|_p>1$, then they have proved that there exists $R>0$ such that $\mathcal{K}^+$ is the set characterized by the points $(x,y)$ in $\mathbb{Q}_p^2$ whose the orbits $(x_n,y_n)=f^n(x,y)$ never satisfy $\|(x_n,y_n)\|_p:=\max\{ |x_n|_p, |y_n|_p\}>R$, for all $n\geq 0$.

Motivated by previous studies of this map, as well as investigations of Julia sets of Hénon maps defined over the $p$-adic numbers $\mathbb{Q}_p^2$, our goal is to describe the backward $p$-adic Julia set of $f$. It is worth noting that the polynomial $f$ does not have a constant Jacobian, which distinguishes it from a genuine Hénon map. For this reason, we refer to $f$ as a Hénon-like map.

It is easy to check that the only fixed points of $f$ in $\mathbb{Q}_p^2$ are $(a,a)$, where $a$ is a root of $$x^2-x+c=0.$$ Obviously, $f$ has no fixed points in $\mathbb{Q}_p^2$ if $1-4c$ is not a square in $\mathbb{Q}_p$ and a single fixed point in $\mathbb{Q}_p^2$ given by $(\frac{1}{2},\frac{1}{2})$ if $1-4c=0$. On the other hand, if $1-4c$ is a nonzero square in $\mathbb{Q}_p$, then $f$ has two distinct fixed points in $\mathbb{Q}_p^2$ given by $\alpha=(\alpha_1,\alpha_1)$ and $\gamma=(\alpha_2,\alpha_2)$ with
\begin{center}
	$\alpha_1=\dfrac{1-q}{2}$ and $\alpha_2=\dfrac{1+q}{2}$
\end{center}
where $q\in\mathbb{Q}_p$ satisfies $q^2=1-4c$. Furthermore, $\rho=(-1,-1)$, $f(\rho)=(1+c,-1)$ and $f^2(\rho)=(-1,1+c)$ is a $3-$cycle of $f$ in $\mathbb{Q}_p^2$.

In this work, we study the backward Julia set defined by $$\mathcal{K}^-=\mathcal{K}^-(f)=\{(x,y)\in\mathsf{Q}: (f^{-n}(x,y))_{n\geq 1} \textrm{ is bounded}\}$$ where $$\mathsf{Q}=\{(x,y)\in\mathbb{Q}^2_p: f^{-n}(x,y) \textrm{ exists for all } n \in\mathbb{Z}_+\}. $$ 

Here, we characterize the backward $p$-adic Julia set of $f$. In particular, if $|c|_p \leq 1$, we prove that $\mathcal{K}^-$ is a bounded subset of $\mathbb{Z}_p^2$. On the other hand, if $|c|_p > 1$, we show that $\mathcal{K}^-$ is an unbounded subset of $\mathbb{Q}_p^2$ and that $\mu_2(\mathcal{K}^-) = +\infty$, where $\mu_2$ denotes the Haar measure on the Borel $\sigma$-algebra of $\mathbb{Q}_p^2$. Furthermore, we exhibit a bounded subset $\mathsf{C}_0 \cup \mathsf{J}_0 \subset \mathbb{Q}_p^2$ satisfying the following property: if $(x,y) \in \mathcal{K}^-$, then there exists $k \in \mathbb{Z}_+$ such that $f^{-n}(x,y) \in \mathsf{C}_0 \cup \mathsf{J}_0$ for all $n \geq k$. It is worth mentioning that the proofs in this paper rely on studying the dynamics of $f$ under a convenient partition of $\mathbb{Q}_p^2$, which allows us to identify the subsets of the partition that are contained in $\mathcal{K}^-$ and those that are not.

The paper is organized as follows. In Section \ref{preli} we provide some preliminaries about $p$-adic numbers. Section \ref{mainresults} is devoted to stating the main results of the paper. In Sections \ref{provakmenosmenor}, \ref{provateokmenosmaior} and \ref{provakmenosigual} we give the proof of Theorems \ref{theokmenosmenor}, \ref{teokmenosmaior} and \ref{teokmenosum}, respectively. In Section \ref{open}, we state some open problems arising from this work.


\section{Preliminaries} \label{preli}

In this section we provide some preliminaries about $p$-adic numbers, most of which can be found in \cite{BCR}, but are included here for completeness and the reader’s convenience.

Let $\mathbb{N}$ be the set of nonnegative integers.
The additive group of $p$-adic integers can be seen as the set $\mathbb{Z}_p$ of sequences $(x_i)_{i \in \mathbb{N}}$
where $x_i$ is an integer $mod \; p^{i+1}$.
Each element of  $\mathbb{Z}_p$  can be formally written as a series
$$\sum_{i=0}^{+\infty} a_i p^i \mbox { where  } a_i \in A= \{0,\ldots, p-1 \}.$$

Let $x= \sum_{i=0}^{+\infty} a_i p^i$ and $y= \sum_{i=0}^{+\infty} b_i p^i $ be  two elements of $\mathbb{Z}_p$. We turn  $\mathbb{Z}_p$ into a compact metric space by  considering the distance
$$d(x, y)= \left\{\begin{array}{cl}
	0 & \mbox { if } a_i= b_i \mbox { for all } i \geq 0, \\
	p^{-j} & \mbox { otherwise, where } j= \min\{i \geq 0;\; a_i \ne b_i\} .
\end{array} \right. $$
We denote
$$| x- y |_p= d(x, y)$$  and we have $$| x+ y |_p \leq \max \{| x |_p, |  y |_p\}$$ 
where $$| x+ y |_p = \max \{| x |_p, |  y |_p\} \textrm{ if } |x|_p \neq |y|_p.$$  

We can prove that $\mathbb{Z}_p$ is the completion of $\mathbb{Z}$ under the metric $d$.

We can also add $x$ to $y$,  by adding coordinate-wise and if any of the sums is $p$ or more, we obtain the sum modulo $ p$ and we take a carry of 1 to the next sum. We can also multiply
elements of $ \mathbb{Z}_p$. With this, we turn $(\mathbb{Z}_p, +,\; \cdot \;)$ into a ring.

The set $\mathbb{Z}_p$ can also be seen as  the projective limit of $(\mathbb{Z}/p^n \mathbb{Z}, \pi_n),\; n \geq 1 $, where for all positive integer $n$,
$\mathbb{Z}/p^n \mathbb{Z}$ is endowed with the discrete topology, $\prod_{n=1}^{+\infty}\mathbb{Z}/p^n \mathbb{Z}$ equipped with the product topology and $ \pi_n :  \mathbb{Z}/p^{n+1} \mathbb{Z} \to  \mathbb{Z}/p^n \mathbb{Z},\;  n \geq 1,$ is the canonical homomorphism.   This limit can be described as
$$\mathbb{Z}_p= \{ (x_n)_{n \geq 1} \in \prod_{n=1}^{+\infty}\mathbb{Z}/p^n \mathbb{Z},\; \pi_n (x_{n+1})= x_n,\; \forall n \geq 1\}.$$
With this definition $\mathbb{Z}_p$ is a compact set of
$\prod_{n=1}^{+\infty}\mathbb{Z}/p^n \mathbb{Z}$ (see \cite{Ro}).
$\mathbb{Z}_p$ is a Cantor set  homeomorphic to $\Pi_{n=1}^{+\infty}A$  endowed with the  product topology of the discrete topologies on $A$.
Through this homeomorphism, any point $x= (x_n)_{n \geq 1}$ can be represented as $x= \sum_{i=0}^{+\infty} a_i p^i \mbox { where  } a_i \in A= \{0,\ldots, p-1 \}$
and $x_n = \sum_{i=0}^{n-1} a_i p^i.$

For any integer $n \geq 1, \; \mathbb{Z}/p^n \mathbb{Z}$
can be seen as a subset of $\mathbb{Z}_p$ and any element
$x $ in $\mathbb{Z}/p^n \mathbb{Z}$ can be represented  as $x=  \sum_{i=0}^{n-1} a_i p^i$ where  $a_i \in A$ for all $0 \leq i \leq n-1$.

\vspace{0.5em}

The field  of fractions of $\mathbb{Z}_p$ is denoted by $\mathbb{Q}_p$.
We have
$$ \mathbb{Q}_p= \mathbb{Z}_p [1/p]= \bigcup_{i \geq 0}p^{-i}\mathbb{Z}_p.$$
Any element  $x$ of $\mathbb{Q}_p \setminus \{0\}$ can be written as a
$$x= \sum_{i=l}^{+\infty} a_i p^i \mbox { where  } a_i \in A= \{0,\ldots, p-1 \},\; l \in \mathbb{Z} \mbox { and } a_l \neq 0.$$
On $ \mathbb{Q}_p$, we define   the absolute value $| \cdot |_p$ by
$|x |_p= p^{-l}$ and the $p$-adic metric $d$ by
$d(x, y)= |x - y|_p$ for all $x, y$ in $ \mathbb{Q}_p$.

We can prove that $\mathbb{Q}_p$ is the completion of $\mathbb{Q}$ under the $p$-adic metric. Moreover,  $\mathbb{Q}_p$  is not compact but is a locally compact set and $\mathbb{Z}_p$ is the closed unit ball of $ \mathbb{Q}_p$.

Finally, it is known that the Haar measure on a locally compact abelian group is a translation-invariant measure that is unique up to multiplication by a constant. Since $\mathbb{Q}_p$ is a locally compact abelian group under addition, there exists a unique Haar measure $\mu$ that is normalized so that $\mu(\mathbb{Z}_p)=1$. Under this normalization, the Haar measure of a closed ball in $\mathbb{Q}_p$ is equal to its radius, that is, $\mu(\overline{B}_r(a)) = r$, where $\overline{B}_r(a)=\{x\in\mathbb{Q}_p: |x-a|_p\leq r\}$ (for the studies about Haar measure under the set of $p$-adic numbers, see for instance \cite{furno} and \cite{zuniga}). Furthermore, the Haar measure $\mu_2$ on $(\mathbb{Q}_p^2,\mathcal{B}(\mathbb{Q}_p^2))$, where $\mathcal{B}(\mathbb{Q}_p^2)$ is the Borel $\sigma$-algebra of $\mathbb{Q}_p^2$, agrees with the product measure, that is $\mu_2=\mu \times \mu$.


\section{Main results} \label{mainresults}

Let $p\in \mathbb{N}$ be an odd prime number, $c\in \mathbb{Q}_p$ be a $p$-adic number and consider the non-Archimedean $p$-adic norm $|\cdot |=|\cdot |_p$ on $\mathbb{Q}_p$. Also, let $f=f_c:\mathbb{Q}^2_p\longrightarrow\mathbb{Q}^2_p$ be the map defined by $f(x,y)=(xy+c,x)$. In this work, we study the backward Julia set defined by $$\mathcal{K}^-=\mathcal{K}^-(f)=\{(x,y)\in\mathsf{Q}: (f^{-n}(x,y))_{n\geq 1} \textrm{ is bounded}\}$$ where $$\mathsf{Q}=\{(x,y)\in\mathbb{Q}^2_p: f^{-n}(x,y) \textrm{ exists for all } n \in\mathbb{Z}_+\}. $$ 

\begin{remark} \label{conjq}
Denoting the backward orbit of $(x,y)$ by $(x_{-n},y_{-n}) = f^{-n}(x,y)$, where $f^{-1}(x,y)=\left(y,\frac{x-c}{y}\right)$, we have that $(x,y)\in\mathsf{Q}$ if, and only if, $y_{-n}\neq 0$ for all $n\in\mathbb{Z}_+$, that is, $y\neq 0$ and $x_{-n}\neq c$ for all $n\in\mathbb{Z}_+$.
\end{remark}

For describing the first main result, it will be necessary to consider the following sets:
$$Z=\{(x,y)\in\mathsf{Q}:|x|=|y|=1\} \textrm{ and } R=\{(x,y)\in\mathsf{Q}:|x|=|y|=|c|\}.$$

\begin{theorem} \label{theokmenosmenor}
	If $|c|<1$ then $Z\subset \mathcal{K}^-\subset R\cup Z$.
\end{theorem}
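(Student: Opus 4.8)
The plan is to reduce the statement to a question about the sequence of norms along the backward orbit, exploiting two features: the ultrametric inequality, which makes these norms evolve by an essentially piecewise-linear rule, and the fact that $\log_p$ of a nonzero $p$-adic norm is always an integer, which rigidifies the bounded orbits. Using Remark \ref{conjq}, I write the backward orbit as $(x_{-n},y_{-n})=f^{-n}(x,y)$, so that $x_{-(n+1)}=y_{-n}$ and $y_{-(n+1)}=\frac{x_{-n}-c}{y_{-n}}$; eliminating $x$, the $y$-coordinates obey $y_{-(n+1)}=\frac{y_{-(n-1)}-c}{y_{-n}}$ for $n\ge 1$, with $y_0=y$ and $y_{-1}=\frac{x-c}{y}$. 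Since $|x_{-(n+1)}|=|y_{-n}|$, boundedness of the backward orbit is equivalent to boundedness of $(|y_{-n}|)_{n\ge 0}$. I set $c_n=\log_p|y_{-n}|$ for $n\ge 0$, $c_{-1}=\log_p|x|$, all integers, and $\kappa=\log_p|c|<0$.

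For the inclusion $Z\subset\mathcal{K}^-$ I would argue by induction that $|x|=|y|=1$ forces $|x_{-n}|=|y_{-n}|=1$ for all $n$. The base case uses $|y_{-1}|=|x-c|=1$, which holds because $|x|=1>|c|$; for the inductive step, if $|y_{-(n-1)}|=|y_{-n}|=1$ then $|y_{-(n-1)}-c|=1$ again since $1>|c|$, whence $|y_{-(n+1)}|=|y_{-(n-1)}-c|/|y_{-n}|=1$. Thus the orbit stays in the unit sphere, is bounded, and remains in $\mathsf{Q}$, so $Z\subset\mathcal{K}^-$ (indeed the orbit never leaves $Z$).

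For $\mathcal{K}^-\subset R\cup Z$ I would argue by contraposition, partitioning $\mathsf{Q}$ by the pair $(|x|,|y|)$ and showing the backward orbit is unbounded whenever $(|x|,|y|)\notin\{(1,1),(|c|,|c|)\}$; the decisive dichotomy is whether the orbit ever reaches a coordinate of norm $\le|c|$. In the first regime, $|x_{-n}|>|c|$ for every $n\ge 0$ (equivalently $c_m>\kappa$ for all $m\ge -1$), so $|y_{-(n-1)}-c|=|y_{-(n-1)}|$ at every step and the integer sequence $(c_n)$ satisfies the linear recursion $c_{n+1}+c_n=c_{n-1}$, whose characteristic roots are $\lambda_\pm=\frac{-1\pm\sqrt{5}}{2}$. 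As $|\lambda_-|=\frac{1+\sqrt{5}}{2}>1$ and $\lambda_-<0$, any solution with nonzero $\lambda_-$-component is unbounded above, so a bounded solution equals $A\lambda_+^{\,n}$; an integer sequence of this form tending to $0$ is identically $0$, giving $|x|=|y|=1$, i.e. $(x,y)\in Z$. Consequently a bounded orbit not in $Z$ must enter the second regime, where some $|x_{-m}|\le|c|$. There I would analyze the piecewise-linear transition $(|x_{-n}|,|y_{-n}|)\mapsto(|y_{-n}|,|x_{-n}-c|/|y_{-n}|)$ on the finitely many cells obtained by comparing $|x_{-n}|$ and $|y_{-n}|$ with $1$ and with $|c|$. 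Using $|x_{-n}-c|=\max(|x_{-n}|,|c|)$ whenever $|x_{-n}|\neq|c|$, one checks that a visit to a coordinate of norm $<|c|$ forces the product $|x_{-(n+1)}||y_{-(n+1)}|=|c|$ and then makes $\max(|x_{-n}|,|y_{-n}|)$ strictly increase at subsequent steps, and likewise once a coordinate exceeds $1$; thus every cell other than the two marginal ones $(1,1)$ and $(|c|,|c|)$ is driven, after finitely many steps, into a regime of strict norm growth. The only marginal configuration compatible with boundedness is $(|x|,|y|)=(|c|,|c|)$, which is $R$, so combining the two regimes yields $\mathcal{K}^-\subset R\cup Z$.

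The genuine difficulty is concentrated in the second regime at the boundary $|x_{-n}|=|c|$, where the ultrametric only gives $|x_{-n}-c|\le|c|$; there the norm map becomes multivalued and the linear analysis breaks down, so I expect the main obstacle to be showing that \emph{every} possible cancellation still produces escape unless the orbit sits exactly on the configuration defining $R$. The rigidity that makes both regimes work is the interplay between the irrationality of $\lambda_-$ and the integrality of the exponents $c_n$, which rules out the bounded "stable-manifold" orbits that the piecewise-linear model would otherwise permit.
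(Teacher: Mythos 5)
Your proof of $Z\subset\mathcal{K}^-$ and your ``first regime'' analysis are correct, and the latter is in essence the same mechanism the paper uses, repackaged: where you diagonalize the valuation recursion $c_{n+1}=c_{n-1}-c_n$ and play the irrationality of $\lambda_{\pm}=\frac{-1\pm\sqrt{5}}{2}$ against the integrality of the $c_n$, the paper (Lemma \ref{lemmaB}, and the $P_4\cup P_5$ part of the lemma on $P$) brackets $|y|$ between consecutive Fibonacci-ratio powers of $|x|$ and runs a Euclidean-type descent on the indices until it reaches the contradiction $1<|y_{-k-1}|\leq 1$. These are the same argument in different clothing, and yours is arguably cleaner for the region where the recursion is exactly linear.

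The gap is concentrated in your second regime, in two places. First, the assertion that a visit to a coordinate of norm $<|c|$ ``makes $\max(|x_{-n}|,|y_{-n}|)$ strictly increase at subsequent steps'' is false in the cell $|x_{-n}|<|c|$, $|c|<|y_{-n}|<1$: there the next point has both coordinates in the annulus $(|c|,1)$, the max stays below $1$, and the orbit can linger in $(|c|,1)^2$ for arbitrarily many steps (this is the paper's $P_4\cup P_5$; the same remark applies to $(1,+\infty)^2$, where the max can even decrease, e.g.\ from $(p^5,p^3)$ to $(p^3,p^2)$). Escape from these squares requires re-invoking the golden-ratio-versus-integrality argument, not monotonicity; this is fixable with tools you already have, but it is not the one-line check you describe. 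Second, and more seriously, the boundary case $|x_{-n}|=|c|$, which you explicitly leave as an unresolved ``obstacle,'' is precisely where the conclusion $\mathcal{K}^-\subset R\cup Z$ (as opposed to $\subset Z$) is decided, so it cannot be deferred. The paper's resolution (the set $P_6$) is short but essential: if $|x_{-n}|=|c|$ and $|y_{-n}|\neq|c|$, then either $|y_{-n}|\geq 1$ and the point already lies in the escaping cell $A_1$, or else $|x_{-(n+1)}|=|y_{-n}|\neq|c|$ and, for \emph{every} possible amount of cancellation in $|x_{-n}-c|\leq|c|$, the next point lands in $P_1\cup P_3\cup A_1$ (when $|y_{-n}|<|c|$) or in $P_2\cup P_4\cup P_5$ (when $|c|<|y_{-n}|<1$), all of which are already proved to escape; only the configuration $|x_{-n}|=|y_{-n}|=|c|$, i.e.\ that of $R$, permits the orbit to remain on the boundary indefinitely. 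Without this step, and without an actual verification that the $A_1\leftrightarrow A_2$ alternation produces the unbounded exponents computed in Lemma \ref{lemmaA}, your proof of the inclusion $\mathcal{K}^-\subset R\cup Z$ is incomplete.
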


\begin{remark} It is worth to mention that, if $c\in\mathbb{Q}_p$ is a $p$-adic number satisfying $|c|<1$, then there exist examples of points in $R\setminus \mathcal{K}^{-}$ as well as points in $R\cap \mathcal{K}^{-}$.

Firstly, for instance, let $c=p$, $x=p+2p^{3}$ and $y=2p$, with $p\neq 2$. Thus, $(x,y)\in R\setminus\mathcal{K}^-$. In fact, observe that
\begin{align*}
	(x_{-1}, y_{-1}) &= (2p, p^2) 
	\Rightarrow(|x_{-1}|, |y_{-1}|) = (p^{-1}, p^{-2}), \\
	(x_{-2}, y_{-2}) &= \left( p^2, 1/p \right) 
	\Rightarrow (|x_{-2}|, |y_{-2}|) = (p^{-2}, p)
\end{align*}
and continuing in this way, we have
\[
(|x_{-2n-1}|,|y_{-2n-1}|)=(p^{2^n-1},\,p^{-2^n}) \;\; \textrm{ and } \;\;
(|x_{-2n-2}|,|y_{-2n-2}|)=(p^{-2^n},\,p^{2^{n+1}-1}),
\]
for all $n\geq 1$.

Hence,
\begin{itemize}
	\item from Remark \ref{conjq}, we have $(x,y)\in\mathsf{Q}$ since $|y_{-n}|\neq 0$, for all $n\in\mathbb{Z}_+$;
	
	\item $(x,y)\in R$, since $|x|=|y|=p^{-1}=|c|$;
	
	\item $(x,y)\not\in \mathcal{K}^-$ since $\|(x_{-n},y_{-n})\|_p \xrightarrow{n\to+\infty } +\infty$.
\end{itemize}

On the other hand, to obtain a point in $R\cap K^{-}$, consider  $c = p - p^{2}$ and $(x,y) = (p,p)$. In this case $(x,y)\in R$, and a direct verification shows that the backward orbit is bounded.
\end{remark}

Let $(F_n)_{n\geq 0}$ be the Fibonacci sequence, that is $F_0=F_1=1$ and $F_n=F_{n-1}+F_{n-2}$, for all $n\geq 2$. Let $|c|>1$, and consider the following sets 
$$\mathsf{J}_0=\{(x,y)\in \mathsf{Q} ;\ |x|<|c|,\ 1<|y|<|c|\}, \;\; \textrm{ and } \;\; \mathsf{C}=\bigcup_{i=0}^{+\infty} (\mathsf{C}_i \cup \mathsf{D}_{i+2}), $$
where
$$\mathsf{C}_0:=\{(x,y)\in\mathsf{Q}: |x|<|c| \textrm{ and } (|y|=1 \textrm{ or } |y|=|c|)\} \cup \{(x,y)\in\mathsf{Q}: |x|=|c| \textrm{ and } |y|\leq|c|\},$$
$$\mathsf{C}_{2n+1}:=\{(x,y)\in\mathsf{Q}: |c|^{F_{2n}}<|x|\leq|c|^{F_{2n+2}} \textrm{ and } |y|=(|c|^{-1}|x|^{F_{2n}})^{1/F_{2n+1}} \}, \; \textrm{ for } n\geq 0,$$
$$\mathsf{C}_{2n+2}:=\{(x,y)\in\mathsf{Q}: |c|^{F_{2n+1}}<|x|\leq|c|^{F_{2n+3}} \textrm{ and } |y|=(|c||x|^{F_{2n+1}})^{1/F_{2n+2}} \}, \; \textrm{ for } n\geq 0,$$
$$\mathsf{D}_2:=\{(x,y)\in\mathsf{Q}: |c|<|x|<|c|^2 \textrm{ and } |y|=|c|\},$$
$$\mathsf{D}_{2n+1}:=\{(x,y)\in\mathsf{Q}: |c|^{F_{2n}}<|x|<|c|^{F_{2n+1}} \textrm{ and } |y|=(|c|^{-1}|x|^{F_{2n-2}})^{1/F_{2n-1}} \}, \; \textrm{ for } n\geq 1,$$
$$\mathsf{D}_{2n+2}:=\{(x,y)\in\mathsf{Q}: |c|^{F_{2n+1}}<|x|<|c|^{F_{2n+2}} \textrm{ and } |y|=(|c||x|^{F_{2n-1}})^{1/F_{2n}} \}, \; \textrm{ for } n\geq 1.$$

\begin{theorem} \label{teokmenosmaior}
If $|c|>1$, then

\medskip 

\noindent a) $\displaystyle \bigcup_{i=0}^{+\infty} f^n(\mathsf{J_0})\subset \mathcal{K}^- \subset \mathsf{C}\cup \bigcup_{i=0}^{+\infty} f^n(\mathsf{J_0}).$

\medskip 

In particular,

\medskip 

\noindent b) if $|c|=p$, then $\mathsf{J}_0 =\emptyset$ and $\mathcal{K}^- \subset \mathsf{C}$;

\medskip 

\noindent c) if $|c|>p$, then $\mu_2(\mathcal{K}^-)=+\infty$, where $\mu_2$ is the Haar measure defined on $(\mathbb{Q}_p^2,\mathcal{B}(\mathbb{Q}_p^2))$.
\end{theorem}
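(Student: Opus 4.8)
The plan is to prove the two inclusions in (a) separately, to read off (b) immediately, and to obtain (c) from the first inclusion of (a) together with a change-of-variables computation. Throughout I write $u=\log_p|x|$, $v=\log_p|y|$ and $\kappa=\log_p|c|\ge 1$, so that the backward map $f^{-1}(x,y)=(y,(x-c)/y)$ induces on absolute values the piecewise-linear recursion $(u,v)\mapsto\bigl(v,\ \max(u,\kappa)-v\bigr)$, valid whenever $u\neq\kappa$.

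For the inclusion $\bigcup_n f^n(\mathsf{J}_0)\subseteq\mathcal{K}^-$ I would first check that $\mathsf{J}_0$ is backward invariant, i.e.\ $f^{-1}(\mathsf{J}_0)\subseteq\mathsf{J}_0$. This is a one-line norm computation: if $(x,y)\in\mathsf{J}_0$ then $f^{-1}(x,y)$ has first coordinate of absolute value $|y|\in(1,|c|)$, and since $|x|<|c|$ forces $|x-c|=|c|$, its second coordinate has absolute value $|c|/|y|\in(1,|c|)$; hence $f^{-1}(x,y)\in\mathsf{J}_0$. Thus the entire backward orbit of a point of $\mathsf{J}_0$ stays in $\mathsf{J}_0$, which is bounded (all norms are $<|c|$), giving $\mathsf{J}_0\subseteq\mathcal{K}^-$. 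Since $\mathcal{K}^-$ is forward invariant (the backward orbit of $f(z)$ is that of $z$ together with the single extra point $z$, hence bounded whenever $z\in\mathcal{K}^-$ and $f(z)\in\mathsf{Q}$), we obtain $f^n(\mathsf{J}_0)\subseteq\mathcal{K}^-$ for all $n$, away from the measure-zero locus where some forward iterate meets $\{x=0\}$ and leaves $\mathsf{Q}$.

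The substantive part is the reverse inclusion $\mathcal{K}^-\subseteq\mathsf{C}\cup\bigcup_n f^n(\mathsf{J}_0)$. On the region $u>\kappa$ the induced recursion is the linear map with matrix $\left(\begin{smallmatrix}0&1\\1&-1\end{smallmatrix}\right)$, whose eigenvalues are $1/\varphi$ and $-\varphi$ with $\varphi=(1+\sqrt5)/2$; its contracting eigendirection has slope $1/\varphi=\lim_n F_n/F_{n+1}$, which is exactly why the Fibonacci ratios enter the definitions of the curves $\mathsf{C}_i$ and $\mathsf{D}_i$. I would partition $\mathsf{Q}$ into the strip $\mathsf{J}_0$, the critical curves $\mathsf{C}_i,\mathsf{D}_i$, and the complementary open regions, and then track the induced dynamics region by region, showing that from any point lying outside $\mathsf{C}$ whose backward orbit never enters $\mathsf{J}_0$ one has $\max(u_n,v_n)\to+\infty$, so the backward orbit is unbounded and the point is not in $\mathcal{K}^-$. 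The main obstacle is precisely this bookkeeping: matching the successive images of each complementary region to the Fibonacci-indexed bounds defining $\mathsf{C}_i,\mathsf{D}_i$, and handling the critical case $u=\kappa$, i.e.\ $|x|=|c|$, where $|x-c|$ is not determined by $|x|$ alone. This indeterminacy is what blocks equality and forces the curves $\mathsf{C}_i,\mathsf{D}_i$ into the undecided set between the two inclusions.

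Part (b) is then immediate: if $|c|=p$ then $\kappa=1$, so $1<|y|<|c|$ is impossible for a $p$-adic norm (whose values lie in $p^{\mathbb Z}$), whence $\mathsf{J}_0=\emptyset$ and the reverse inclusion of (a) collapses to $\mathcal{K}^-\subseteq\mathsf{C}$. For part (c), since $\bigcup_n f^n(\mathsf{J}_0)\subseteq\mathcal{K}^-$ it suffices to show $\mu_2\bigl(\bigcup_n f^n(\mathsf{J}_0)\bigr)=+\infty$. Backward invariance of $\mathsf{J}_0$ gives $\mathsf{J}_0\subseteq f(\mathsf{J}_0)\subseteq f^2(\mathsf{J}_0)\subseteq\cdots$, so the union is increasing with measure $\lim_n\mu_2(f^n(\mathsf{J}_0))$. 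Because $\det Df(x,y)=-x$, the $p$-adic change of variables yields $\mu_2(f^{n+1}(\mathsf{J}_0))=\int_{f^n(\mathsf{J}_0)}|x|\,d\mu_2$. A short induction using the forward recursion $(u,v)\mapsto(u+v,u)$ on $u>\kappa$ shows that $f^n(\mathsf{J}_0)$ contains annular pieces on which $|x|$ attains $|c|^{F_n}\to+\infty$; since the annulus $\{|x|=|c|^{F_n}\}$ has measure of order $|c|^{F_n}$, the integrals $\int_{f^n(\mathsf{J}_0)}|x|\,d\mu_2$, hence the measures $\mu_2(f^{n+1}(\mathsf{J}_0))$, diverge. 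Therefore $\mu_2(\mathcal{K}^-)=+\infty$ whenever $|c|>p$, equivalently whenever $\mu_2(\mathsf{J}_0)>0$, completing the proof.
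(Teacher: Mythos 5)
Your treatment of the first inclusion of (a) and of part (b) matches the paper's: backward invariance $f^{-1}(\mathsf{J}_0)\subset\mathsf{J}_0$ plus boundedness gives $\mathsf{J}_0\subset\mathcal{K}^-$, forward invariance of $\mathcal{K}^-$ handles the forward images, and the emptiness of $\mathsf{J}_0$ for $|c|=p$ is the same valuation-gap observation (you are right to flag the points whose forward images leave $\mathsf{Q}$; the paper glosses over this). The genuine gap is in the second inclusion of (a), which is where essentially all of the paper's work lies. You reduce it to ``tracking the induced dynamics region by region'' and then explicitly defer the bookkeeping; but that bookkeeping \emph{is} the proof. Concretely, the paper decomposes $\mathsf{Q}=\mathsf{C}\cup\mathsf{J}\cup\mathsf{F}\cup\mathsf{G}\cup\mathsf{H}\cup\mathsf{M}$, where $\mathsf{J}=\bigcup_i\mathsf{J}_i$ is a Fibonacci-indexed chain with $f^{-1}(\mathsf{J}_i)\subset\mathsf{J}_{i-1}$ (so $\mathsf{J}\subset\bigcup_n f^n(\mathsf{J}_0)\subset\mathcal{K}^-$), $\mathsf{M}=\bigcup_i\mathsf{M}_i$ is a second chain funneling into $\mathsf{G}\cup\mathsf{H}$, and the escape on $\mathsf{G}\cup\mathsf{H}$ is a doubly-exponential estimate $\|f^{-n}(x,y)\|\ge p^{2^{\lfloor n/2\rfloor}(b-d)+d}$ coming from the two-regime recursion $v\mapsto\max(u,\kappa)-v$. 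Your eigenvalue heuristic for the linear regime $u>\kappa$ does not by itself yield $\max(u_n,v_n)\to+\infty$ (the expanding eigenvalue is negative and the $\max$ switches regime), and verifying that the chains $\mathsf{J}_i$, $\mathsf{M}_i$ and the curves $\mathsf{C}_i$, $\mathsf{D}_i$ exhaust $\mathsf{Q}$ requires the Cassini-type identities $F_nF_{n-2}-F_{n-1}^2=(-1)^n$; none of this appears in your proposal, so the second inclusion is not established.

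For (c) you take a genuinely different route: the change-of-variables identity $\mu_2(f(A))=\int_A|x|\,d\mu_2$ (using $\det Df=-x$) applied to the increasing sequence $f^n(\mathsf{J}_0)$, versus the paper's explicit exhibition of disjoint products of spheres $\mathsf{T}_n=\{|x|=p^{(k-1)F_{n+1}},\ |y|=p^{(k-1)F_n}\}$ inside $\mathcal{K}^-$ with $\sum_n\mu_2(\mathsf{T}_n)=+\infty$. Your identity is correct, but the step you wave at --- that $f^n(\mathsf{J}_0)$ contains annuli on which $|x|$ is of Fibonacci-exponential size \emph{and} whose two-dimensional measure diverges --- is exactly the content of constructing the $\mathsf{T}_n$ (note $\mathsf{T}_n\subset f^n(\mathsf{T}_0)\subset f^n(\mathsf{J}_0)$). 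As written this part too is an outline rather than a proof; if you carry out the ``short induction'' you will essentially reproduce the paper's sets $\mathsf{T}_n$, at which point the Jacobian identity becomes unnecessary.
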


\begin{remark} If $|c|>1$, then there exist examples of points in $\mathsf{C}\setminus \mathcal{K}^{-}$ as well as points in $\mathsf{C}\cap \mathcal{K}^{-}$. 

For instance, let $c=\frac{1}{p}$ and $(x,y)=\left(\frac{1}{p}+p^{2},\,1\right)$. Since $|x|=|c|=p$ and $|y|<|c|$, we have $(x,y)\in \mathsf{C}_{0}\subset \mathsf{C}$.  Note that the first iterates satisfy
		\[
		(|x_{-1},|y_{-1}|)=(1,p^{-2}),\qquad
		(|x_{-2}|,|y_{-2}|)=(p^{-2},p^{3}),
		\]
		\[
		(|x_{-3}|,|y_{-3}|)=(p^{3},p^{-2}),\qquad
		(|x_{-4}|,|y_{-4}|)=(p^{-2},p^{5}),
		\]
		so the same alternating pattern continues, with the $p$-adic norms growing without bound, that is $
		\|f^{-n}(x,y)\|_p\longrightarrow +\infty$, and therefore $(x,y)\notin \mathcal{K}^{-}$.
		
		On the other hand, let $c=\frac{1}{p}$, and consider $(x,y)=(1,1)$. In this case $|x|=1<|c|=p$ and $|y|=1$, so $(x,y)\in \mathsf{C}_{0}\subset \mathsf{C}$.  
		A direct computation of the backward iterates shows that $
		\|(x_{-n},y_{-n})\|_p \le p \quad\text{for all } n\ge 0$. 
		Therefore, $(x,y)\in \mathsf{C}\cap \mathcal{K}^{-}$.
\end{remark}

\begin{theorem} \label{teokmenosum}
	If $|c|=1$, then  $\mathcal{K}^- \subset \mathsf{C}_0$, that is
	$$\mathcal{K}^- \subset \{(x,y)\in \mathsf{Q}: \; \max \{|x|,|y|\}=1\}.$$
\end{theorem}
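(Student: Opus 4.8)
The plan is to reduce the two–dimensional backward dynamics to a scalar second–order recursion. Writing $f^{-1}(x,y)=\left(y,\frac{x-c}{y}\right)$ and putting $u_0=x$, $u_1=y$, an easy induction gives $f^{-n}(x,y)=(u_n,u_{n+1})$, where $u_{n+2}=\frac{u_n-c}{u_{n+1}}$; membership in $\mathsf{Q}$ guarantees $u_n\neq 0$ for $n\ge 1$ and $u_n\neq c$ for every $n$ (otherwise $u_2=0$ would kill the orbit). Setting $a_n:=|u_n|$ we have $\|f^{-n}(x,y)\|=\max\{a_n,a_{n+1}\}$, so $(x,y)\in\mathcal{K}^-$ exactly when $(a_n)$ is bounded. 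Because $\mathcal{K}^-$ is invariant under $f^{-1}$, it is enough to prove the contrapositive at the initial point: if $\max\{|x|,|y|\}\neq 1$, then $\max\{a_n,a_{n+1}\}\to\infty$.

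The decisive simplification comes from $|c|=1$. Writing $b_n:=\log_p a_n=-v_p(u_n)\in\mathbb{Z}$ (and reading $a_0=0$ as a negative sign), the ultrametric evaluation of $|u_n-c|$ in $a_{n+2}=|u_n-c|/a_{n+1}$ yields three update rules: Rule $+$, if $b_n>0$ then $b_{n+2}=b_n-b_{n+1}$; Rule $-$, if $b_n<0$ then $b_{n+2}=-b_{n+1}$; and Rule $0$, if $b_n=0$ then $b_{n+2}=\delta_n-b_{n+1}$ with $\delta_n=\log_p|u_n-c|\le 0$ finite (since $u_n\neq c$). Everything now depends only on the signs of the consecutive pairs $(b_n,b_{n+1})$.

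The heart of the argument is a doubling estimate: \emph{if $b_n$ and $b_{n+1}$ have strictly opposite signs, then $\max\{a_m,a_{m+1}\}\to\infty$.} Assuming $b_n>0>b_{n+1}$, Rule $+$ gives $b_{n+2}=b_n-b_{n+1}>0$ and then Rule $-$ gives $b_{n+3}=-b_{n+2}$, after which one verifies by induction that the sign pattern $(+,-)\mapsto(-,+)\mapsto(+,-)$ repeats and that $|b_m|$ doubles every two steps; since no exponent returns to $0$ in this regime all rules apply cleanly and $\max\{a_m,a_{m+1}\}=p^{\max\{b_m,b_{m+1}\}}\to\infty$. The case $b_n<0<b_{n+1}$ reduces to this one after a single application of Rule $-$.

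It remains to drive every initial pair with $\max\{a_0,a_1\}\neq 1$ into the opposite–sign regime in finitely many steps. If $b_0,b_1<0$, Rule $-$ makes $b_2=-b_1>0$, so $(b_1,b_2)$ already has opposite signs. If exactly one of $b_0,b_1$ vanishes while the other is positive, two applications of the rules (using $\delta_n\le 0$) create an opposite–sign pair. If $b_0,b_1>0$, then Rule $+$ acts as the subtractive Euclidean algorithm $(\beta,\gamma)\mapsto(\gamma,\beta-\gamma)$ on positive integers, strictly decreasing $\max\{\beta,\gamma\}$ while both entries stay positive; after finitely many steps some entry becomes $\le 0$, yielding either a zero exponent (previous case) or an opposite–sign pair. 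In every case the doubling estimate applies, so $(a_n)$ is unbounded and hence $\mathcal{K}^-\subset\mathsf{C}_0$. I expect the delicate point to be Rule $0$: on the boundary $a_n=1$ the quantity $|u_n-c|$ is only known to lie in $(0,1]$, so the recursion is not a genuine deterministic map there, and one must check that every admissible $\delta_n\le 0$ still forces the orbit off $\{\max=1\}$ and into the doubling regime rather than letting it linger.
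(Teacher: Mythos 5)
Your argument is correct, and it proves the theorem by a genuinely different organization of the same underlying mechanism. The paper proceeds by exhibiting the static partition $\mathsf{Q}=\mathsf{C}_0\cup\mathsf{F}\cup\mathsf{G}\cup\mathsf{H}\cup\mathsf{M}$, where the sets $\mathsf{M}_i$ are carved out by Fibonacci-exponent inequalities, and then proving the transition lemmas $f^{-1}(\mathsf{M}_{i})\subset\mathsf{M}_{i-1}$ and $f^{-1}(\mathsf{M}_1)\subset\mathsf{H}$ together with the doubling escape on $\mathsf{G}\cup\mathsf{H}$ (Lemmas \ref{lemac1fgh} and \ref{lemac1m}), with the Cassini identity doing the bookkeeping. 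You instead track the single valuation sequence $b_n=-v_p(u_n)$ and observe that on the region where both exponents are positive the recursion is the subtractive Euclidean algorithm; its termination (strict decrease of the maximum of two positive integers) replaces the explicit Fibonacci partition, and your finitely many Euclidean steps correspond exactly to the paper's descent through $\mathsf{M}_{2n+2},\dots,\mathsf{M}_1,\mathsf{H}$. Your alternating-sign doubling estimate is the same computation as in Lemma \ref{fatoufgh}, and your ``Rule $0$'' correctly disposes of the boundary exponent $b_n=0$ for every admissible $\delta_n\le 0$, since the only such configurations reachable from $\max\{|x|,|y|\}\neq 1$ have $b_{n+1}>0$, which forces $b_{n+2}<0$. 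What you lose relative to the paper is the explicit geometric description of $\mathsf{Q}\setminus\mathcal{K}^-$ (the picture in Figure \ref{partitioncigual1} and the golden-ratio line $|y|=|x|^{1/\beta}$), which the authors reuse for the $|c|>1$ case; what you gain is a shorter, self-contained argument that avoids the Fibonacci identities entirely.
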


\begin{remark}
If $|c|=1$, then there exist points in $\mathsf{C}_0\cap \mathcal{K}^{-}$ as well as points in $\mathsf{C}_0\setminus \mathcal{K}^{-}$, where $\mathsf{C}_0=\{(x,y)\in\mathsf{Q} : \max\{|x|,|y|\}=1\}$. For instance, let $c=1$. Hence,

\begin{itemize}
		
		\item $(x,y)=(-1,-p) \in \mathsf{C}_0\setminus \mathcal{K}^-$. In fact, we have $\max\{|-1|,|-p|\}=1$ and 
$$(x_{-1}, y_{-1}) = (-p, 2/p) 
	\Rightarrow(|x_{-1}|, |y_{-1}|) = (p^{-1}, p).$$
Thus,
		\[
		(|x_{-2n}|,|y_{-2n}|)=(p^{2^{n-1}},\,p^{-2^{n-1}}) \;\; \textrm{ and } \;\;
		(|x_{-2n-1}|,|y_{-2n-1}|)=(p^{-2^{n-1}},\,p^{2^{n}}),
		\]
		for all $n\geq 1$.
		
		Hence,
		\begin{itemize}
			\item from Remark \ref{conjq}, we have $(x,y)\in\mathsf{Q}$ since $|y_{-n}|\neq 0$, for all $n\in\mathbb{Z}_+$;
			
			\item $(x,y)\not\in \mathcal{K}^-$ since $\|(x_{-n},y_{-n})\|_p \xrightarrow{n\to\infty } \infty$.
		\end{itemize}
			
		\item On the other hand, $(x,y)=(-1,-1)\in \mathsf{C}_0\cap \mathcal{K}^{-}$. In fact, 
		 $\max\{|x|,|y|\}=1$ and $(-1,-1)$ is a periodic point of $f$.		
	\end{itemize}
\end{remark}


\section{Proof of Theorem \ref{theokmenosmenor}} \label{provakmenosmenor}

Here, we want to prove that $$Z\subset \mathcal{K}^-\subset R\cup Z$$ where 
$$Z=\{(x,y)\in\mathsf{Q}:|x|=|y|=1\} \textrm{ and } R=\{(x,y)\in\mathsf{Q}:|x|=|y|=|c|\}.$$

Observe that $Z\subset \mathcal{K}^-$ since $Z$ is a bounded set and $f^{-1}(Z)\subset Z$. Furthermore, $R\cap \mathcal{K}^-\neq \emptyset$ since $\alpha=(\alpha_1,\alpha_1)\in R\cap \mathcal{K}^-$.

For each positive integer $n\in \mathbb{Z}_+$, let $f^{-n}(x,y)=(x_{-n},y_{-n})$. Observe that $f^{-n}(x,y)$ exists when $y_{-n+1}\neq 0$, that is $x_{-n}\neq 0$ for all $n \in\mathbb{Z}_+$.

For proving Theorem \ref{theokmenosmenor}, we consider the convenient partition of $\mathsf{Q}$ given by $$\mathsf{Q}=A\cup B \cup P \cup R \cup Z$$
where
$$B=\{(x,y)\in\mathsf{Q}:|x|>1 \textrm{ and } |y|>1\},$$
$$P=\{(x,y)\in\mathsf{Q}:|x|<1 \textrm{ and } |y|<1\}\setminus R$$
$$\textrm{and}$$
$$A=\mathsf{Q} \setminus (B\cup P \cup R \cup Z)$$
and from the following lemmas, we prove that $A\cup B\cup P \subset \mathsf{Q}\setminus \mathcal{K}^-$.

Let $\|\cdot\|$ the $p$-adic norm on $\mathbb{Q}_p^2$ defined by $$\|(x,y)\|:=\max\{ |x|, |y|\}, \textrm{ for all } (x,y)\in \mathbb{Q}_p^2$$ where $|\cdot |$ is the $p$-adic norm on $\mathbb{Q}_p$.

\begin{lemma} \label{lemmaA}
	If $(x,y)\in A$ then $\displaystyle \lim_{n\to -\infty} \|f^n(x,y)\|=+\infty$.
\end{lemma}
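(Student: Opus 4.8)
The plan is to pass to the explicit backward dynamics $f^{-1}(x,y)=\left(y,\frac{x-c}{y}\right)$ and to follow the pair of coordinate norms of the backward orbit. Writing $(x_{-n},y_{-n})=f^{-n}(x,y)$ one has $x_{-n-1}=y_{-n}$ and $y_{-n-1}=\frac{x_{-n}-c}{y_{-n}}$, so that $|x_{-n-1}|=|y_{-n}|$ and $|y_{-n-1}|=|x_{-n}-c|/|y_{-n}|$; since $|c|<1$, the ultrametric identity gives $|x_{-n}-c|=\max(|x_{-n}|,|c|)$ whenever $|x_{-n}|\neq|c|$. I would classify $A$ into its six ``mixed'' types according to how $|x|$ and $|y|$ compare with $1$, and single out the two I will call \emph{type I} ($|x|>1>|y|$) and \emph{type II} ($|x|<1<|y|$). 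The goal is to show that every point of $A$ falls, after at most two backward steps, into the type I\,$\leftrightarrow$\,type II oscillation, along which the larger coordinate norm grows geometrically, whence $\|f^{-n}(x,y)\|\to+\infty$.

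The first step is bookkeeping on the remaining four types using the norm recurrence above. A direct computation yields the transitions $(|x|>1,\,|y|=1)\to(|x|=1,\,|y|>1)\to$ type I, $\ (|x|=1,\,|y|>1)\to$ type I, $\ (|x|=1,\,|y|<1)\to$ type II, and $(|x|<1,\,|y|=1)\to(|x|=1,\,|y|<1)\to$ type II, in every case with the max–norm $\|f^{-n}(x,y)\|$ non-decreasing (indeed exactly preserved on the norm-preserving steps). Thus within two iterates the orbit reaches type I or type II.

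The core estimate is the behaviour on I and II. From a type I point, $|x_{-n-1}|=|y_{-n}|<1$ while $|y_{-n-1}|=|x_{-n}|/|y_{-n}|>|x_{-n}|>1$ (using $|x_{-n}|>1>|c|$), so the image is type II and the max–norm is multiplied by $1/|y_{-n}|>1$; from a type II point, $|x_{-n-1}|=|y_{-n}|>1$ and $|y_{-n-1}|=|x_{-n}-c|/|y_{-n}|<1$, so the image is type I with the max–norm unchanged. Hence the orbit alternates strictly between the two types, and tracking two consecutive type I instants shows $|y_{-n-2}|=|y_{-n}-c|\,|y_{-n}|/|x_{-n}|<|y_{-n}|$, so the small coordinate at the type I instants strictly decreases; consequently the multiplicative factors $1/|y_{-n}|$ stay bounded below by a constant $\theta^{-1}>1$. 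Therefore $\|f^{-n}(x,y)\|$ is non-decreasing and is multiplied by at least $\theta^{-1}$ infinitely often, giving $\lim_{n\to-\infty}\|f^{n}(x,y)\|=+\infty$.

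The main obstacle is the degenerate cases in which a coordinate norm equals $|c|$, where the ultrametric identity gives only $|x_{-n}-c|\le|c|$ instead of an equality. I would dispose of these by noting that, since $|c|<1$, such a coordinate is still $<1$, so the type (I or II) is unaffected, while the weaker inequality only makes the small coordinate smaller, i.e.\ only accelerates the growth of the large one. One must also verify that the orbit never escapes $A$ into $B$, $P$, $R$ or $Z$: this is automatic once in the oscillation, since one coordinate stays $>1$ and the other $<1$, which excludes all four sets. Finally, membership of $(x,y)$ in $\mathsf{Q}$ guarantees that every denominator $y_{-n}$ is nonzero, so the recurrence is legitimate throughout.
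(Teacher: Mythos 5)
Your argument is correct, and it follows the same overall strategy as the paper -- partition $A$ according to how the coordinate norms compare with certain thresholds, show that every piece is funnelled in a bounded number of backward steps into a period-two oscillation between a ``large $x$, small $y$'' region and a ``small $x$, large $y$'' region, and then show the norm blows up along that oscillation -- but the execution differs in two ways worth noting. First, the paper's partition $A=A_1\cup\dots\cup A_6$ uses $|c|$ as an additional threshold (e.g.\ $A_1=\{|x|\le|c|,\ |y|\ge1\}$, $A_5=\{|x|\ge1,\ |c|<|y|<1\}$), and its transition diagram is correspondingly more intricate; your six ``mixed types'' compare only with $1$, which makes the funnelling into the type I\,$\leftrightarrow$\,type II cycle a two-step affair and keeps the degenerate cases $|x_{-n}|=|c|$ confined to a single remark. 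Second, the paper quantifies the growth by tracking explicit exponents $K_n$ (with $K_{2i}=K_{2i-1}+1$, $K_{2i+1}=K_{2i}+K_{2i-1}$) along the $A_1\leftrightarrow A_2$ cycle, whereas you observe that the max-norm is non-decreasing at every step and is multiplied by $1/|y_{-n}|\ge p$ at each type I instant (discreteness of the value group $p^{\mathbb{Z}}$ makes your lower bound $\theta^{-1}$ automatic, so the auxiliary claim that the small coordinate strictly decreases between consecutive type I instants is not even needed). Your version buys a shorter and more conceptual conclusion; the paper's buys an explicit growth rate for $\|f^{-n}(x,y)\|$, which is in the spirit of the quantitative estimates it needs elsewhere (Lemmas \ref{lemmaB} and following). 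Both are complete proofs of the statement.
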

\begin{proof} Firstly, observe that $A=A_1\cup A_2 \cup A_3 \cup A_4 \cup A_5 \cup A_6$	where
	
	\begin{itemize}
		\item $A_1=\{(x,y)\in \mathsf{Q} : |x|\leq |c| \textrm{ and } |y|\geq 1 \}$,
		
		\item $A_2=\{(x,y)\in \mathsf{Q} : |x|\geq 1 \textrm{ and } |y|\leq |c| \}$,
		
		\item $A_3=\{(x,y)\in \mathsf{Q} : |x|>1 \textrm{ and } |y|=1 \}$,
		
		\item $A_4=\{(x,y)\in \mathsf{Q} : |x|=1 \textrm{ and } |y|>1 \}$,
		
		\item $A_5=\{(x,y)\in \mathsf{Q} : |x|\geq 1 \textrm{ and } |c|<|y|<1 \}$,
		
		\item $A_6=\{(x,y)\in \mathsf{Q} : |c|<|x|<1 \textrm{ and } |y|\geq 1 \}$.
	\end{itemize}

	It is not hard to prove that $f^{-1}(A_1)\subset A_2$, $f^{-1}(A_2)\subset A_1$, $f^{-1}(A_3)\subset A_4$, $f^{-1}(A_4)\subset A_2\cup A_5$, $f^{-1}(A_5)\subset A_6$, $f^{-1}(A_6)\subset A_2\cup A_5$ and $f^{-2}(A_5)\subset A_2$. Hence, without loss of generality, let us consider $(x,y)\in A_1$.
	
	Since $$|x_{-n-1}|=|y_{-n}| \textrm{ and } |y_{-n-1}|=\dfrac{|x_{-n}-c|}{|y_{-n}|}$$
	it follows that
	$$|x|\leq |c| \textrm{ and } |y|\geq 1,$$ $$|x_{-1}|\geq 1 \textrm{ and } |y_{-1}|\leq |c|,$$ $$|x_{-2}|\leq |c| \textrm{ and } |y_{-2}|\geq |c|^{-1}.$$
	Thus, continuing in this way, we have
	$$|x_{-2i}|\leq |c|^{K_{2i-2}} \textrm{ and } |y_{-2i-2}|\geq |c|^{-K_{2i-1}}$$
	and
	$$|x_{-2i-1}|\geq |c|^{-K_{2i-1}} \textrm{ and } |y_{-2i-1}|\leq |c|^{K_{2i}}$$
	for all $i\geq 1$, where $K_0=K_1=1$ and $$K_{2i}=K_{2i-1}+1 \textrm{ and } K_{2i+1}=K_{2i}+K_{2i-1}$$
	for all $i\geq 1$.
	
	Since $|c|<1$ and $K_n$ goes to infinity when $n$ also go, we finished the proof of Lemma.
\end{proof}

\begin{remark} \label{fib}
	If $\beta=\frac{1+\sqrt{5}}{2}$ is the golden ratio and $(F_n)_{n\geq 0}$ is the Fibonacci sequence, that is $F_0=F_1=1$ and $F_n=F_{n-1}+F_{n-2}$, for all $n\geq 2$, then it is easy to check that
	$$\dfrac{F_{2n+1}}{F_{2n}} < \dfrac{F_{2n+3}}{F_{2n+2}}<\beta <\dfrac{F_{2n+4}}{F_{2n+3}} < \dfrac{F_{2n+2}}{F_{2n+1}}, \;\; \textrm{ for all } n\geq 0.$$
\end{remark}

\begin{lemma} \label{lemmaB}
	If $(x,y)\in B$ then $\displaystyle \lim_{n\to -\infty} \|f^n(x,y)\|=+\infty$.
\end{lemma}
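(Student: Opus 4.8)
The plan is to show that the backward orbit of every point of $B$ enters the region $A$ after finitely many steps, and then to conclude by Lemma \ref{lemmaA}. First I would record the action of $f^{-1}(x,y)=\left(y,\frac{x-c}{y}\right)$ on the norms. If $(x,y)\in B$, then $|x|>1>|c|$, so $|x-c|=|x|$, and hence
\[
|x_{-1}|=|y|>1 \qquad\text{and}\qquad |y_{-1}|=\frac{|x|}{|y|}.
\]
In particular $|x_{-1}|>1$ always, so $f^{-1}(x,y)$ can only lie in $A$ or in $B$ (never in $P$, $R$ or $Z$, each of which would require $|x_{-1}|\le 1$).

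Next I would separate three cases according to the sizes of $|x|$ and $|y|$. If $|x|<|y|$, then $|y_{-1}|<1$ while $|x_{-1}|>1$, so $f^{-1}(x,y)\in A_2\cup A_5\subset A$. If $|x|=|y|$, then $|y_{-1}|=1$, so $f^{-1}(x,y)\in A_3\subset A$. Only when $|x|>|y|$ does the image stay in $B$. This proves $f^{-1}(B)\subset A\cup B$ and shows that a backward orbit remains in $B$ only as long as $|x_{-n}|>|y_{-n}|$ at every step.

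The crux is to rule out an orbit that stays in $B$ forever. Suppose $f^{-n}(x,y)\in B$ for all $n\ge 0$; by the previous paragraph this forces $|x_{-n}|>|y_{-n}|$ for all $n$. Writing $|x_{-n}|=p^{a_n}$ with $a_n$ a positive integer (the $p$-adic norm takes values in $p^{\mathbb{Z}}$, and $a_n>0$ because $|x_{-n}|>1$), the identity $|x_{-(n+1)}|=|y_{-n}|$ together with $|y_{-n}|<|x_{-n}|$ yields $a_{n+1}<a_n$. Thus $(a_n)_{n\ge 0}$ would be a strictly decreasing sequence of positive integers, which is impossible. Hence there is a smallest $n_0\ge 0$ with $f^{-n_0}(x,y)\in A$. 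Applying Lemma \ref{lemmaA} to $f^{-n_0}(x,y)$ and reindexing the limit gives $\lim_{n\to-\infty}\|f^n(x,y)\|=+\infty$.

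I expect the only delicate point to be the case analysis guaranteeing that every exit from $B$ lands precisely in $A$ (and that $|x_{-1}|>1$ excludes landing in $R$, $P$ or $Z$); once this is in place, the strictly-decreasing-integer argument and the reduction to Lemma \ref{lemmaA} are immediate. Note that, unlike the Fibonacci-type estimate used for $A$, here the boundedness of the backward orbit inside $B$ is obstructed purely by the integrality of the valuations, so no explicit growth rate is needed.
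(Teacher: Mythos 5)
Your proof is correct, and while it shares the overall strategy of the paper (show the backward orbit must exit $B$ into $A$ after finitely many steps, then invoke Lemma \ref{lemmaA}), the mechanism you use to rule out an orbit that stays in $B$ forever is genuinely different and considerably simpler. The paper splits $B$ into $B_1=\{|x|>1,\ 1<|y|<|x|^{1/\beta}\}$ and $B_2=\{|x|>1,\ |y|>|x|^{1/\beta}\}$ with $\beta$ the golden ratio, proves $f^{-1}(B_1)\subset B_2$ and $f^{-1}(B_2)\subset B_1\cup A_2\cup A_3\cup A_5$, and then, assuming the orbit never leaves $B$, tracks inequalities of the form $|y_{-k}|\lessgtr |x_{-k}|^{F_{m}/F_{m+1}}$ down the Fibonacci ladder until it reaches an absurdity such as $1<|y_{-k-1}|\le 1$. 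You instead observe that remaining in $B$ for one more backward step is equivalent to $|x_{-n}|>|y_{-n}|$, so that $|x_{-(n+1)}|=|y_{-n}|<|x_{-n}|$, and hence the exponents $a_n=\log_p|x_{-n}|$ would form a strictly decreasing sequence of positive integers --- impossible. Your case analysis of where the exit lands ($A_2\cup A_3\cup A_5$, with $P$, $R$, $Z$ excluded because $|x_{-1}|>1$) matches the paper's. What your argument buys is the complete elimination of the Fibonacci machinery from this lemma; what it costs is generality, since it relies essentially on the discreteness of the value group $p^{\mathbb{Z}}$ (a strictly decreasing sequence bounded below would not be contradictory over a field with dense valuation), whereas the paper's golden-ratio argument does not use discreteness. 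For $\mathbb{Q}_p$ this is of course perfectly legitimate.
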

\begin{proof} Firstly, observe that $B=B_1 \cup B_2$ where
	
	\begin{itemize}
		\item $B_1=\{(x,y)\in \mathsf{Q} : |x|>1 \textrm{ and } 1<|y|<|x|^{1/\beta}\}$,
		
		\item $B_2=\{(x,y)\in \mathsf{Q} : |x|>1 \textrm{ and } |y|>|x|^{1/\beta}\}$
	\end{itemize}
	since there no exist $(x,y)\in \mathbb{Q}_p^2$ such that $|y|=|x|^{1/\beta}$.
	
	It is not hard to prove that 
	\begin{center}
		$f^{-1}(B_1)\subset B_2$ and $f^{-1}(B_2)\subset B_1\cup A_2\cup A_3 \cup A_5$
	\end{center} 
	where $A_2$, $A_3$ and $A_5$ are defined in the proof of Lemma \ref{lemmaA}
	
	Let $(x,y)\in B$. If $f^{-i}(x,y)\in A_2\cup A_3 \cup A_5$ for some $i\geq 1$ then we are done by Lemma \ref{lemmaA}. Hence, let us suppose that $f^{-n}(x,y)\in B$ for all $n\in \mathbb{Z}_+$ and without loss of generality, let $(x,y)\in B_2$.
	
	Thus, $|x|>1$, $|y|>|x|^{1/\beta}$ and from Remark \ref{fib}, there exist $n\in\mathbb{Z}_+$ such that 
	$$|y|\geq |x|^{F_{2n}/F_{2n+1}}.$$
	Hence,
	$$|x_{-1}|=|y|>1  \textrm{ and } 1<|y_{-1}|=\dfrac{|x|}{|y|}\leq |y|^{(F_{2n+1}/F_{2n})-1}=|x_{-1}|^{F_{2n-1}/F_{2n}},$$
	$$|x_{-2}|=|y_{-1}|>1  \textrm{ and } |y_{-2}|=\dfrac{|x_{-1}|}{|y_{-1}|}\geq |y|^{(F_{2n}/F_{2n-1})-1}=|x_{-2}|^{F_{2n-2}/F_{2n-1}}$$
	and continuing in this way, there exist $k\in\mathbb{Z}_+$ such that
	$$|x_{-k}|>1  \textrm{ and } 1<|y_{-k}|\leq |x_{-k}|^{F_{1}/F_{2}}=|x_{-k}|^{1/2}$$
	$$\textrm{or}$$
	$$|x_{-k}|>1  \textrm{ and } |y_{-k}| \geq |x_{-k}|^{F_{0}/F_{1}}=|x_{-k}|$$
	that is
	$$|x_{-k-1}|>1  \textrm{ and } |y_{-k-1}|\geq |x_{-k-1}| \implies |x_{-k-2}|>1  \textrm{ and } 1<|y_{-k-2}|\leq 1 $$
	$$\textrm{or}$$
	$$|x_{-k-1}|>1  \textrm{ and } 1<|y_{-k-1}|\leq 1 $$
	which is a contradiction.
	
	Therefore, there exists $k\in \mathbb{Z}_+$ such that $f^{-k}(x,y)\in A_2\cup A_3 \cup A_5$ and from Lemma \ref{lemmaA} we are done.
\end{proof}

\begin{lemma}
	If $(x,y)\in P$ then $\displaystyle \lim_{n\to -\infty} \|f^n(x,y)\|=+\infty$.
\end{lemma}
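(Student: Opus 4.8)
The plan is to imitate the proofs of Lemmas~\ref{lemmaA} and~\ref{lemmaB}: I would split $P$ into finitely many pieces according to how $|x|$ and $|y|$ compare with $|c|$ and with $1$, compute the image of each piece under $f^{-1}$ using $|x_{-n-1}|=|y_{-n}|$ and $|y_{-n-1}|=|x_{-n}-c|/|y_{-n}|$, and show that the backward orbit of every point of $P$ eventually enters $A\cup B$. Once this is done, Lemmas~\ref{lemmaA} and~\ref{lemmaB} immediately give $\lim_{n\to-\infty}\|f^n(x,y)\|=+\infty$. It is convenient to write $|z|=p^{-v(z)}$ and to set $\gamma:=v(c)$; since the valuation on $\mathbb{Q}_p$ is integer-valued we have $v(x),v(y),\gamma\in\mathbb{Z}$ with $\gamma>0$, a fact I will use crucially.

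First I would dispose of the pieces lying at or below the level $|c|$. For instance, if $|x|<|c|$ and $|y|\le|c|$, then $|x-c|=|c|$, so $|x_{-1}|=|y|\le|c|$ and $|y_{-1}|=|c|/|y|\ge 1$; hence $f^{-1}(x,y)\in A_1$. The same one-step computation places the pieces $\{|c|<|x|<1,\ |y|\le|c|\}$ and the generic part of $\{|x|=|c|,\ |y|<|c|\}$ into $A_1$ as well, because there $v(x)\le\gamma$ forces the second coordinate of $f^{-1}$ to have norm $\ge 1$ while the first has norm $<1$. The remaining pieces in which one coordinate has norm $|c|$ demand a little more attention, since the ultrametric inequality only yields $|x-c|\le|c|$ and cancellation may occur; however, checking each possible value of $v(x-c)\ge\gamma$ shows that the image always falls into a piece already treated or into $A$, so these cases reduce to the previous ones.

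The heart of the matter, and the step I expect to be the main obstacle, is the central region $P_\ast:=\{(x,y)\in\mathsf{Q}:|c|<|x|<1,\ |c|<|y|<1\}$, where $f^{-1}$ need not leave $\{|x|<1,\ |y|<1\}$ in one step, so one must rule out the orbit being trapped. Here $v(x)<\gamma$, whence $|x-c|=|x|$ and there is no cancellation; writing $a_n=v(x_{-n})$ and $b_n=v(y_{-n})$, the recursion $a_{n+1}=b_n$, $b_{n+1}=a_n-b_n$ holds as long as the orbit stays in $P_\ast$. If at some step $a_n\le b_n$ (that is, $|x_{-n}|\ge|y_{-n}|$), then $v(y_{-n-1})=a_n-b_n\le 0$, so $|y_{-n-1}|\ge 1$ while $|x_{-n-1}|=|y_{-n}|<1$, and the point lands in $A_6$ or in $A$; otherwise $a_n>b_n=a_{n+1}$, so $(a_n)$ is a strictly decreasing sequence of positive integers, which cannot continue indefinitely. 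Thus after finitely many steps the orbit reaches a step with $a_n\le b_n$ and exits into $A$. This is exactly the Fibonacci/golden-ratio mechanism of Remark~\ref{fib}, here in its sharpest subtractive form, and it is the integer-valuedness of $v$ (forbidding an infinite strictly decreasing run) that closes the argument.

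Combining the three steps, every backward orbit starting in $P$ eventually enters $A\cup B$, and Lemmas~\ref{lemmaA} and~\ref{lemmaB} finish the proof. The only genuinely delicate points are the trapping region $P_\ast$ and the cancellation occurring when $|x|=|c|$ or $|y|=|c|$; both are controlled by the discreteness of the $p$-adic valuation, which turns the Fibonacci growth into a finite descent.
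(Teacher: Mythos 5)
Your proposal is correct and its overall skeleton matches the paper's: partition $P$ into finitely many pieces according to how $|x|,|y|$ compare with $|c|$ and $1$, show every backward orbit is funneled into $A$, and invoke Lemma \ref{lemmaA}. The one-step computations for the pieces with $|y|\le|c|$ and for $|x|=|c|$ agree with the paper's $f^{-1}(P_1\cup P_2)\subset A_1$ and the treatment of $P_6$. Where you genuinely diverge is in the trapping region $P_\ast=\{|c|<|x|<1,\ |c|<|y|<1\}$: the paper splits it into $P_4$ and $P_5$ along the line $|y|=|x|^{1/\beta}$ and derives a contradiction by pushing the exponent of $|y|$ through the Fibonacci ratios $F_{2n}/F_{2n+1}$ of Remark \ref{fib}, exactly mirroring Lemma \ref{lemmaB}; you instead set $a_n=v(x_{-n})$, $b_n=v(y_{-n})$, observe the recursion $a_{n+1}=b_n$, $b_{n+1}=a_n-b_n$ (valid since $|x_{-n}|>|c|$ kills cancellation), and note that either $a_n\le b_n$ at some step, which ejects the orbit into $A_6$, or $(a_n)$ is a strictly decreasing sequence of positive integers, which is impossible. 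This subtractive-Euclidean descent is more elementary and shorter than the Fibonacci bookkeeping, and it makes explicit that the discreteness of the valuation is what closes the argument; the paper's version has the advantage of being uniform with the proof of Lemma \ref{lemmaB}, where the valuations are negative and the same descent does not apply verbatim. One small bookkeeping point: your first-step enumeration omits the piece $\{|x|\le|c|,\ |c|<|y|<1\}$ (the paper's $P_3$ and part of $P_6$); it is not at or below level $|c|$ in both coordinates, but a one-line computation ($|x_{-1}|=|y|$, $|y_{-1}|=|c|/|y|$ or $\le|c|/|y|$) sends it into $P_\ast$ or into an already-treated piece, so your argument covers it once that line is added.
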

\begin{proof} Firstly, observe that $P=P_1 \cup P_2 \cup P_3 \cup P_4 \cup P_5\cup P_6$ where
	
	\begin{itemize}
		\item $P_1=\{(x,y)\in \mathsf{Q} : |x|<|c| \textrm{ and } |y|\leq |c|\}$,
		
		\item $P_2=\{(x,y)\in \mathsf{Q} : |c|<|x|<1 \textrm{ and } |y|\leq |c|\}$,
		
		\item $P_3=\{(x,y)\in \mathsf{Q} : |x|<|c| \textrm{ and } |c|<|y| <1 \}$,
		
		\item $P_4=\{(x,y)\in \mathsf{Q} : |c|<|x|<1 \textrm{ and } |c|<|y|<|x|^{1/\beta}\}$,
		
		\item $P_5=\{(x,y)\in \mathsf{Q} : |c|<|x|<1 \textrm{ and } |x|^{1/\beta}<|y|<1\}$,
		
		\item $P_6=\{(x,y)\in \mathsf{Q} : |x|=|c| \textrm{ and } |y|<|c| \textrm{ or } |c|<|y|< 1\}$
	\end{itemize}
	since there no exist $(x,y)\in \mathbb{Q}_p^2$ such that $|y|=|x|^{1/\beta}$.
	
	It is not hard to prove that
	$$\begin{array}{llll}
		f^{-1}(P_1\cup P_2)\subset A_1, &&& f^{-1}(P_3)\subset P_4\cup P_5, \\
		f^{-1}(P_4)\subset P_5\cup A_6, &&& f^{-1}(P_5)\subset P_4. \\
	\end{array}$$	
	where $A_1$ and $A_6$ are defined in the proof of Lemma \ref{lemmaA}. Hence, if $(x,y)\in P_1 \cup P_2 \cup P_3 \cup P_4 \cup P_5$, it suffices to consider the orbit contained in $P_4\cup P_5$, because otherwise if $f^{-i}(x,y)\in A_1\cup A_6$ for some $i\geq 1$ then we are done by Lemma \ref{lemmaA}.
	Without loss of generality, let $(x,y)\in P_4$.
	
	Thus, $|c|<|x|<1$, $|c|<|y|<|x|^{1/\beta}$ and from Remark \ref{fib}, there exist $n\in\mathbb{Z}_+$ such that 
	$$|c|<|y|\leq |x|^{F_{2n}/F_{2n+1}}.$$
	Hence, since $|c|<|x_{-i}|<1$ for all $i\in \mathbb{Z}_+$, in the same way as in the proof of Lemma \ref{lemmaB}, there exists $k\in\mathbb{Z}_+$ such that $|c|<|x_{-k}|<1$ and
	$$|x_{-k}|^{1/2} = |x_{-k}|^{F_{1}/F_{2}} \leq |y_{-k}|<1$$
	$$\textrm{or}$$
	$$ |c|< |y_{-k}| \leq |x_{-k}|^{F_{0}/F_{1}}=|x_{-k}|$$
	that is
	$$|y_{-k-1}|\leq |x_{-k-1}| \implies  1\leq |y_{-k-2}|< 1 $$
	$$\textrm{or}$$
	$$ 1\leq |y_{-k-1}|< 1 $$
	which is a contradiction.
	
	Therefore, there exists $k\in \mathbb{Z}_+$ such that $f^{-k}(x,y)\in A_1\cup A_6$ and from Lemma \ref{lemmaA} we are done.
	
Finally, consider $(x,y)\in P_6$. Thus $|x|=|c|$ and
	\begin{itemize}
		\item if $|y|<|c|$ then $(x_{-1},y_{-1})\in P_1\cup P_3\cup A_1$
		
		\item if $|c|<|y|<1$ then $(x_{-1},y_{-1})\in P_2\cup P_4\cup P_5$
	\end{itemize}
that is, $\displaystyle \lim_{n\to -\infty} \|f^n(x,y)\|=+\infty$.
\end{proof}


\section{Proof of Theorem \ref{teokmenosmaior}} \label{provateokmenosmaior}

\begin{proof}[Proof of item a) of Theorem \ref{teokmenosmaior}]
\begin{lemma}
	We have $f^{-1}(\mathsf{J_0})\subset{\mathsf{J}_0} \subset \mathcal{K}^-$, where
	$$\mathsf{J}_0 :=\{(x,y)\in \mathsf{Q} : \ |x|<|c|,\ 1<|y|<|c|   \}.$$
\end{lemma}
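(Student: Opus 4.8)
The plan is to show that $\mathsf{J}_0$ is a \emph{backward-invariant trapping region} on which the norm stays strictly below $|c|$; once the inclusion $f^{-1}(\mathsf{J}_0)\subset\mathsf{J}_0$ is established, the inclusion $\mathsf{J}_0\subset\mathcal{K}^-$ is immediate. So the whole statement reduces to proving the single inclusion $f^{-1}(\mathsf{J}_0)\subset\mathsf{J}_0$, after which boundedness of every backward orbit follows by induction.

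To prove this inclusion I would fix $(x,y)\in\mathsf{J}_0$, so that $|x|<|c|$ and $1<|y|<|c|$, and compute $f^{-1}(x,y)=(x_{-1},y_{-1})$ using the explicit formula $f^{-1}(x,y)=\left(y,\frac{x-c}{y}\right)$ recorded in Remark \ref{conjq}. The first coordinate is $x_{-1}=y$, so $|x_{-1}|=|y|\in(1,|c|)$; in particular $|x_{-1}|<|c|$, which is exactly the first defining condition of $\mathsf{J}_0$ (note that $\mathsf{J}_0$ imposes no lower bound on $|x|$, so there is nothing to check there). For the second coordinate the key step is the ultrametric identity: since $|x|<|c|$, the strict-inequality case of the non-Archimedean norm gives $|x-c|=\max\{|x|,|c|\}=|c|$. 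Hence $|y_{-1}|=\frac{|x-c|}{|y|}=\frac{|c|}{|y|}$, and from $1<|y|<|c|$ we obtain $1<\frac{|c|}{|y|}<|c|$, i.e.\ $1<|y_{-1}|<|c|$. Together with the fact that $\mathsf{Q}$ is backward-invariant by definition (the backward orbit of $f^{-1}(x,y)$ is just a tail of that of $(x,y)$, hence exists), this shows $(x_{-1},y_{-1})\in\mathsf{J}_0$.

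With backward invariance in hand, induction yields $f^{-n}(x,y)\in\mathsf{J}_0$ for all $n\geq 1$. Since every point of $\mathsf{J}_0$ satisfies $\|(x,y)\|=\max\{|x|,|y|\}<|c|$, the backward orbit is uniformly bounded by $|c|$, and therefore $(x,y)\in\mathcal{K}^-$. This completes the plan.

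The computation is routine and I do not expect a serious obstacle. The only point requiring genuine care is the ultrametric identity $|x-c|=|c|$, which relies \emph{essentially} on the strict inequality $|x|<|c|$; this is precisely why the definition of $\mathsf{J}_0$ requires $|x|<|c|$ rather than $|x|\leq|c|$. As a sanity check on the regime, when $|c|=p$ the value group contains no value strictly between $1$ and $p$, so $\mathsf{J}_0=\emptyset$ and the statement holds vacuously (consistent with item b)); the genuine content appears when $|c|>p$, where $\mathsf{J}_0$ can be nonempty.
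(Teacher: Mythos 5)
Your proof is correct and follows essentially the same route as the paper: compute $f^{-1}(x,y)=\left(y,\frac{x-c}{y}\right)$, use the ultrametric identity $|x-c|=|c|$ (valid since $|x|<|c|$) to get $|y_{-1}|=|c|/|y|\in(1,|c|)$, conclude backward invariance of $\mathsf{J}_0$, and deduce $\mathsf{J}_0\subset\mathcal{K}^-$ from boundedness. Your write-up is in fact slightly more careful than the paper's (which states only $|x_{-1}|\leq|c|$ and leaves the ultrametric step implicit), but there is no substantive difference.
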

\begin{proof}
	If $(x,y)\in \mathsf{J}_0$, then $|x_{-1}|=|y|\leq |c|$ and $|y_{-1}|=\left|\frac{x-c}{y}\right|=\frac{|c|}{|y|}$, that is $1<|y_{-1}|<|c| $. Therefore, $f^{-1}(\mathsf{J_0})\subset{\mathsf{J}_0}$ and since $\mathsf{J_0}$ is a bounded subset, it follows that $\mathsf{J}_0 \subset \mathcal{K}^-$.
\end{proof}

Now, let us consider the sets
$$\mathsf{J}_1:=\{(x,y)\in \mathsf{Q} : \ |x|>|c|, \ |y|<|c| \textrm{ and } |y|<|x|<|c||y|\},$$
$$\mathsf{J}_2:=\{(x,y)\in \mathsf{Q} : \ |y|>|c|, \ |x|<|c||y| \textrm{ and } |x|<|y|^2<|c||x|\},$$
and for each $n\geq 1$, let
$$\mathsf{J}_{2n+1}:=\left\{ (x,y)\in \mathsf{Q}: \ |x|^{F_{2n-2}}> |c||y|^{F_{2n-1}}, \ |y|^{F_{2n}}<|c| |x|^{F_{2n-1}} \textrm{ and }  |y|^{F_{2n+1}} <|x|^{F_{2n}} < |c||y|^{F_{2n+1}} \right\}$$
and
$$\mathsf{J}_{2n+2}:=\left\{ (x,y)\in \mathsf{Q}: \ |y|^{F_{2n}}> |c||x|^{F_{2n-1}}, \ |x|^{F_{2n}}<|c| |y|^{F_{2n+1}} \textrm{ and }  |x|^{F_{2n+1}} <|y|^{F_{2n+2}} < |c||x|^{F_{2n+1}} \right\}.$$

\begin{remark} Let $(F_n)_{n\ge 0}$ be the Fibonacci sequence defined by $F_0=1$, $F_1=1$, and
	\[
	F_n = F_{n-1} + F_{n-2}, \textrm{ for all} \quad n>1.
	\]
	
	Consider the Fibonacci matrix
	\[
	M = \begin{pmatrix} 1 & 1 \\ 1 & 0 \end{pmatrix}.
	\]
	
	Setting $F_{-1}=0$, it is well-known that
	\[
	M^n = \begin{pmatrix} F_{n} & F_{n-1} \\ F_{n-1} & F_{n-2} \end{pmatrix}  \textrm{ for all } n\geq 1.
	\]
	
	Taking the determinant on both sides, we have
	\[
	(-1)^n=\det(M^n) = \det \begin{pmatrix} F_{n} & F_{n-1} \\ F_{n-1} & F_{n-2} \end{pmatrix} = F_{n} F_{n-2} - F_{n-1}^2.
	\]
	
	Hence, we obtain the Cassini identity:
	
\begin{equation} \label{fibrelation}
F_{n} F_{n-2} - F_{n-1}^2 = (-1)^n \textrm{ for all } n\geq 1.
\end{equation}
Furthermore,
$$F_{n+1} F_{n-2}=F_{n}F_{n-2} +F_{n-1}F_{n-2}= F_{n-1}^2+(-1)^{n} + F_{n-1}F_{n-2}=F_{n-1}(F_{n-1}+F_{n-2})+(-1)^{n} $$
that is
\begin{equation} \label{fibrelation2}
F_{n+1} F_{n-2} = F_{n}F_{n-1}+(-1)^{n}, \textrm{ for all } n\geq 1.
\end{equation}		
\end{remark}

\begin{remark}
The next two lemmas are important for obtaining a geometric representation of the set $\mathcal{K}^-$ in the $p$-adic norm. See Figures \ref{regiaomjinicial} and \ref{regiaomj} for the representation of these sets in $p$-adic norm.
\end{remark}

\begin{lemma} \label{lemajimpar}
If $|c|>1$ then 
$$\textsf{B}_1:=\mathsf{J}_1=\{(x,y)\in \mathsf{Q} : \ |c|<|x|<|c|^2 \textrm{ and } |c|^{-1}|x|<|y|<|c|\}$$
and $\mathsf{J}_{2n+1}=\mathsf{B}_{2n} \cup \mathsf{B}_{2n+1}$ for all $n\geq 1$, where
$$\mathsf{B}_{2n}=\{(x,y)\in \mathsf{Q} : \ |c|^{F_{2n}}<|x|\leq |c|^{F_{2n+1}}  \textrm{ and } (|c|^{-1}|x|^{F_{2n}})^{1/F_{2n+1}} <|y|< (|c|^{-1}|x|^{F_{2n-2}})^{1/F_{2n-1}} \}$$
and
$$\mathsf{B}_{2n+1}=\{(x,y)\in \mathsf{Q} : \ |c|^{F_{2n+1}}<|x|< |c|^{F_{2n+2}} \textrm{ and } (|c|^{-1}|x|^{F_{2n}})^{1/F_{2n+1}} <|y|< (|c||x|^{F_{2n-1}})^{1/F_{2n}}  \}.$$
\end{lemma}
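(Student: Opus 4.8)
The plan is to reduce the stated set identities, which constrain only the norms $|x|$ and $|y|$, to systems of linear inequalities by passing to logarithms. Writing $s=\log_p|x|$, $t=\log_p|y|$ and $\gamma=\log_p|c|>0$ (legitimate since $|c|>1$ and $\log_p$ is strictly increasing, so every strict or non-strict inequality between norms is preserved), each condition defining $\mathsf{J}_{2n+1}$, $\mathsf{B}_{2n}$ and $\mathsf{B}_{2n+1}$ becomes a linear inequality in $(s,t)$ with Fibonacci coefficients. Since membership in all of these sets depends only on the pair $(|x|,|y|)$, it suffices to prove the equivalence of the corresponding inequality systems in the $(s,t)$-plane.

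For the base case $\mathsf{B}_1=\mathsf{J}_1$ I would simply rewrite the defining conditions: $|y|<|x|<|c||y|$ together with $|y|<|c|$ give $s-t<\gamma$ and $s<\gamma+t<2\gamma$, while $s>t$ holds automatically because $s>\gamma>t$; conversely $\gamma<s<2\gamma$ and $t<\gamma$ force $s>t$, and hence the two systems coincide. This is a short direct manipulation requiring no induction.

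The heart of the argument is the general step. I would introduce the three linear forms $P=F_{2n-2}s-F_{2n-1}t$, $Q=F_{2n-1}s-F_{2n}t$ and $S=F_{2n}s-F_{2n+1}t$, and observe from the Fibonacci recurrence that $S=P+Q$. In these variables the defining conditions of $\mathsf{J}_{2n+1}$ read exactly $P>\gamma$, $Q>-\gamma$ and $0<P+Q<\gamma$, where the constraint $P+Q>0$ is in fact redundant, being forced by $P>\gamma$ and $Q>-\gamma$. Using the Cassini identity \eqref{fibrelation} with index $2n$, the change of variables $(s,t)\mapsto(P,Q)$ is invertible with $s=F_{2n}P-F_{2n-1}Q$ (the determinant equals $(-1)^{2n}=1$). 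The sets $\mathsf{B}_{2n}$ and $\mathsf{B}_{2n+1}$ are governed by the same forms $P$, $Q$, $P+Q$ together with the explicit ranges $F_{2n}\gamma<s\le F_{2n+1}\gamma$ and $F_{2n+1}\gamma<s<F_{2n+2}\gamma$; so I would split $\mathsf{J}_{2n+1}$ at the threshold $s=F_{2n+1}\gamma$ and match the two pieces.

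Concretely, for $\mathsf{J}_{2n+1}\subseteq\mathsf{B}_{2n}\cup\mathsf{B}_{2n+1}$: when $s\le F_{2n+1}\gamma$, the formula $s=F_{2n}P-F_{2n-1}Q$ with $P>\gamma$ and $Q<0$ (the latter from $P+Q<\gamma<P$) gives $s>F_{2n}\gamma$, so the point lies in $\mathsf{B}_{2n}$; when $s>F_{2n+1}\gamma$, the estimate $s<F_{2n}\gamma-F_{2n+1}Q<F_{2n+2}\gamma$ (using $P<\gamma-Q$ and $Q>-\gamma$) places it in $\mathsf{B}_{2n+1}$. For the reverse inclusions I would recover the dropped inequality on each side: in $\mathsf{B}_{2n}$ the bound $s\le F_{2n+1}\gamma$ gives $F_{2n-1}Q\ge F_{2n}P-F_{2n+1}\gamma>-F_{2n-1}\gamma$, hence $Q>-\gamma$, using $P>\gamma$ and $F_{2n+1}-F_{2n-1}=F_{2n}$; symmetrically, in $\mathsf{B}_{2n+1}$ the bound $s>F_{2n+1}\gamma$ together with $Q>-\gamma$ yields $P>\gamma$; and $P+Q>0$ is free in both cases. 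I expect the main obstacle to be purely bookkeeping: keeping the Fibonacci indices aligned across the four inequalities, correctly identifying which single inequality is dropped on each side of the threshold, and treating the boundary $s=F_{2n+1}\gamma$ consistently with the half-open range of $\mathsf{B}_{2n}$. The relation $S=P+Q$ and the Cassini identity are precisely what make each of these estimates collapse to a one-line comparison.
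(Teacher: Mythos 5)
Your argument is correct and is in substance the same as the paper's: passing to logarithms and inverting via $s=F_{2n}P-F_{2n-1}Q$ is just a repackaging of the paper's direct pairwise comparison of the four norm inequalities, relying on the same Cassini identity and the same case split at $|x|=|c|^{F_{2n+1}}$ (your treatment of the reverse inclusions is, if anything, more explicit than the paper's). One cosmetic slip: the determinant of $(s,t)\mapsto(P,Q)$ is $F_{2n-1}^2-F_{2n-2}F_{2n}=-1$ rather than $+1$, but the inversion formula $s=F_{2n}P-F_{2n-1}Q$ that you actually use follows correctly from Cassini with even index, so nothing is affected.
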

\begin{proof} It is easy to check that
	$$\textsf{B}_1=\{(x,y)\in \mathsf{Q} : \ |c|<|x|<|c|^2 \textrm{ and } |c|^{-1}|x|<|y|<|c|\}.$$

	If $(x,y)\in \mathsf{J}_{2n+1}$, then
$$\begin{array}{rlcrl}
	i)   &  |y|<(|c|^{-1}|x|^{F_{2n-2}})^{1/F_{2n-1}};  &  \;\;\;  &      &   \\
	ii)  &  |y|<(|c||x|^{F_{2n-1}})^{1/F_{2n}};  &   &  iv)  &  |y|>(|c|^{-1}|x|^{F_{2n}})^{1/F_{2n+1}}.  \\
	iii) &  |y|<|x|^{F_{2n}/F_{2n+1}}; & & & \\
\end{array}$$

From $i)$ and $iv)$ we have that $(|c|^{-1}|x|^{F_{2n}})^{1/F_{2n+1}}< (|c|^{-1}|x|^{F_{2n-2}})^{1/F_{2n-1}}$, that is
$$|c|^{-F_{2n-1}}|x|^{F_{2n}F_{2n-1}}< |c|^{-F_{2n+1}}|x|^{F_{2n+1}F_{2n-2}} \implies |c|^{F_{2n+1}-F_{2n-1}}< |x|^{F_{2n+1}F_{2n-2}-F_{2n}F_{2n-1}} .$$
Moreover
\begin{align*}
	F_{2n+1}F_{2n-2}-F_{2n}F_{2n-1}&=(F_{2n}+F_{2n-1})F_{2n-2}-F_{2n}F_{2n-1}\\
	&= F_{2n}F_{2n-2}+F_{2n-1}F_{2n-2}-F_{2n}F_{2n-1} \\
	&= 1+ F_{2n-1}^2 +F_{2n-1}F_{2n-2}-F_{2n}F_{2n-1} \textrm{ (from Relation \ref{fibrelation})} \\
	&= 1+ F_{2n-1} (F_{2n-1}+F_{2n-2})-F_{2n}F_{2n-1} \\
	&= 1+ F_{2n-1}F_{2n}-F_{2n}F_{2n-1} \\
	&= 1. \\
\end{align*}
Thus, from $i)$ and $iv)$ we have $|x|>|c|^{F_{2n}}$.

From $ii)$ and $iv)$ we have  $(|c|^{-1}|x|^{F_{2n}})^{1/F_{2n+1}}< (|c||x|^{F_{2n-1}})^{1/F_{2n}}$, that is,
$$|c|^{-F_{2n}}|x|^{F_{2n}^2}< |c|^{F_{2n+1}}|x|^{F_{2n+1}F_{2n-1}} \implies |x|^{F_{2n}^2-F_{2n+1}F_{2n-1}}< |c|^{F_{2n+1}+F_{2n}} .$$
Since from Relation \ref{fibrelation} we have $F_{2n}^2-F_{2n+1}F_{2n-1}=1$, it follows that $|x|<|c|^{F_{2n+2}}$.

From relations $iii)$ and $iv)$, it follows $|c|^{-1}<1$, which does not give any additional information about $|x|$.

Finally, using Relation \ref{fibrelation}, we can check that the inequalities $i)$, $ii)$ and $iii)$ of $\mathsf{J}_{2n+1}$ satisfy
$$ |y|<(|c|^{-1}|x|^{F_{2n-2}})^{1/F_{2n-1}} \leq |x|^{F_{2n}/F_{2n+1}} \leq (|c||x|^{F_{2n-1}})^{1/F_{2n}}, \textrm{ if }  |c|^{F_{2n}}<|x|\leq |c|^{F_{2n+1}}$$
and
$$|y|<(|c||x|^{F_{2n-1}})^{1/F_{2n}}<|x|^{F_{2n}/F_{2n+1}}<(|c|^{-1}|x|^{F_{2n-2}})^{1/F_{2n-1}}, \textrm{ if }  |c|^{F_{2n+1}}<|x|< |c|^{F_{2n+2}}.$$
Hence, we obtain a partition $\mathsf{B}_{2n} \cup \mathsf{B}_{2n+1}$ of $\mathsf{J}_{2n+1}$ given by
$$\mathsf{B}_{2n}=\{(x,y)\in \mathsf{Q} : \ |c|^{F_{2n}}<|x|\leq |c|^{F_{2n+1}}  \textrm{ and } (|c|^{-1}|x|^{F_{2n}})^{1/F_{2n+1}} <|y|< (|c|^{-1}|x|^{F_{2n-2}})^{1/F_{2n-1}} \}$$
and
$$\mathsf{B}_{2n+1}=\{(x,y)\in \mathsf{Q} : \ |c|^{F_{2n+1}}<|x|< |c|^{F_{2n+2}} \textrm{ and } (|c|^{-1}|x|^{F_{2n}})^{1/F_{2n+1}} <|y|< (|c||x|^{F_{2n-1}})^{1/F_{2n}} \}.$$
\end{proof}

\begin{lemma} \label{lemajpar}
If $|c|>1$ then $\mathsf{J}_{2}=\mathsf{A}_{1} \cup \mathsf{A}_{2}$, where
$$\mathsf{A}_1=\{(x,y)\in \mathsf{Q} : \ |c|<|x|\leq |c|^2  \textrm{ and } |c|<|y|< (|c||x|)^{1/2}\},$$
$$\mathsf{A}_2=\{(x,y)\in \mathsf{Q} : \ |c|^2<|x|< |c|^3  \textrm{ and } |c|^{-1}|x|<|y|< (|c||x|)^{1/2}\}$$
and for all $n\geq 1$, $\mathsf{J}_{2n+2}=\mathsf{A}_{2n+1} \cup \mathsf{A}_{2n+2}$ satisfy
$$\mathsf{A}_{2n+1}=\{(x,y)\in \mathsf{Q} : \ |c|^{F_{2n+1}}<|x|\leq |c|^{F_{2n+2}}  \textrm{ and } (|c||x|^{F_{2n-1}})^{1/F_{2n}} <|y|< (|c||x|^{F_{2n+1}})^{1/F_{2n+2}} \}$$
and
$$\mathsf{A}_{2n+2}=\{(x,y)\in \mathsf{Q} : \ |c|^{F_{2n+2}}<|x|< |c|^{F_{2n+3}} \textrm{ and } (|c|^{-1}|x|^{F_{2n}})^{1/F_{2n+1}} <|y|< (|c||x|^{F_{2n+1}})^{1/F_{2n+2}}  \}.$$
\end{lemma}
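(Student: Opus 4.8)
The plan is to follow the same strategy as in the proof of Lemma~\ref{lemajimpar}, reducing everything to comparisons among explicit power-of-$|x|$ expressions whose crossover points are controlled by the Cassini identity~\eqref{fibrelation} and its companion~\eqref{fibrelation2}. For the general case $\mathsf{J}_{2n+2}$ with $n\geq 1$, I would first rewrite the three defining inequalities as bounds on $|y|$ in terms of $|x|$. The inequality $|y|^{F_{2n+2}}<|c||x|^{F_{2n+1}}$ gives the single upper bound $|y|<(|c||x|^{F_{2n+1}})^{1/F_{2n+2}}=:U$, while $|y|^{F_{2n}}>|c||x|^{F_{2n-1}}$, $|x|^{F_{2n}}<|c||y|^{F_{2n+1}}$ and $|x|^{F_{2n+1}}<|y|^{F_{2n+2}}$ yield three lower bounds
$$L_1=(|c||x|^{F_{2n-1}})^{1/F_{2n}},\quad L_2=(|c|^{-1}|x|^{F_{2n}})^{1/F_{2n+1}},\quad L_3=|x|^{F_{2n+1}/F_{2n+2}},$$
respectively. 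Thus $(x,y)\in\mathsf{J}_{2n+2}$ precisely when $\max\{L_1,L_2,L_3\}<|y|<U$, and the whole lemma reduces to comparing these four quantities as functions of $|x|$.

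Next I would determine the admissible range of $|x|$ from the requirement $\max\{L_1,L_2,L_3\}<U$. As in Lemma~\ref{lemajimpar}, each comparison $L_i<U$ becomes, after raising to a suitable positive power, an inequality of the form $|c|^{a}<|x|^{b}$ with $a,b$ certain Fibonacci combinations, and the point is that the relevant exponent differences collapse to $1$. Concretely, $L_1<U$ reduces to $|c|^{F_{2n+1}}<|x|$ using $F_{2n+1}F_{2n}-F_{2n-1}F_{2n+2}=1$ (an instance of~\eqref{fibrelation2}), and $L_2<U$ reduces to $|x|<|c|^{F_{2n+3}}$ using $F_{2n}F_{2n+2}-F_{2n+1}^2=1$ (an instance of~\eqref{fibrelation}); together these pin the range to $|c|^{F_{2n+1}}<|x|<|c|^{F_{2n+3}}$, matching the two pieces $\mathsf{A}_{2n+1}$ and $\mathsf{A}_{2n+2}$.

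I would then locate the value of $|x|$ at which the dominant lower bound switches. The comparison $L_1$ versus $L_2$ gives $L_1>L_2\iff|x|<|c|^{F_{2n+2}}$ via $F_{2n}^2-F_{2n-1}F_{2n+1}=1$ (again from~\eqref{fibrelation}), and the same manipulations show $L_1\geq L_3$ exactly for $|x|\leq|c|^{F_{2n+2}}$ and $L_2\geq L_3$ exactly for $|x|\geq|c|^{F_{2n+2}}$. Hence $L_3$ is never the binding constraint, and $\max\{L_1,L_2,L_3\}$ equals $L_1$ on $|c|^{F_{2n+1}}<|x|\leq|c|^{F_{2n+2}}$ and equals $L_2$ on $|c|^{F_{2n+2}}<|x|<|c|^{F_{2n+3}}$. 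Substituting the dominant lower bound together with the common upper bound $U$ in each subregion produces exactly $\mathsf{A}_{2n+1}$ and $\mathsf{A}_{2n+2}$, establishing $\mathsf{J}_{2n+2}=\mathsf{A}_{2n+1}\cup\mathsf{A}_{2n+2}$. The base case $\mathsf{J}_2=\mathsf{A}_1\cup\mathsf{A}_2$ is the same computation specialized to $F_{-1}=0$, $F_0=F_1=1$, $F_2=2$, $F_3=3$, where the three lower bounds become $|c|$, $|c|^{-1}|x|$ and $|x|^{1/2}$, the range is $|c|<|x|<|c|^3$, and the switch occurs at $|x|=|c|^2$.

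The main obstacle is purely bookkeeping: keeping track of the Fibonacci exponents and their parity so that each pairwise comparison collapses to the correct instance of~\eqref{fibrelation} or~\eqref{fibrelation2}, with the determinant sign $(-1)^{m}$ evaluated at the right index. No new idea beyond Lemma~\ref{lemajimpar} is required; the single point that deserves care is verifying that the auxiliary bound $L_3$, coming from $|x|^{F_{2n+1}}<|y|^{F_{2n+2}}$, is redundant throughout, so that the partition is clean and the boundary value $|x|=|c|^{F_{2n+2}}$ is correctly assigned (via the $\leq$ endpoint) to $\mathsf{A}_{2n+1}$.
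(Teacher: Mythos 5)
Your proposal is correct and follows essentially the same route as the paper: recasting the defining inequalities of $\mathsf{J}_{2n+2}$ as $\max\{L_1,L_2,L_3\}<|y|<U$, extracting the range $|c|^{F_{2n+1}}<|x|<|c|^{F_{2n+3}}$ from the comparisons $L_1<U$ and $L_2<U$ via the Cassini-type identities, and observing that $L_3$ is sandwiched between $L_1$ and $L_2$ with the switch at $|x|=|c|^{F_{2n+2}}$. This is exactly the paper's argument (comparisons i)--iv) and the ordering of the three lower bounds on each subinterval), so no further comment is needed.
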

\begin{proof}  If $(x,y)\in \mathsf{J}_2$, then
	$$\begin{array}{rlcrl}
		i)   &  |y|>|c|;  &  \;\;\;  &      &   \\
		ii)  &  |y|>|c|^{-1}|x|;  &   &  iv)  &  |y|<(|c||x|)^{1/2}.  \\
		iii) &  |y|>|x|^{1/2}; & & & \\
	\end{array}$$

	From $i)$ and $iv)$ we have  $|c|< (|c||x|)^{1/2}$, that is, $|x|>|c|$.
	
	From $ii)$ and $iv)$ we have  $|c|^{-1}|x|< (|c||x|)^{1/2}$, that is, $|x|<|c|^3$.
	
	It is not necessary to compare $iii)$ and $iv)$.
	
	Finally, we can check that
	$$|y|>|c|\geq |x|^{1/2}\geq |c|^{-1}|x|, \textrm{ if }  |c|<|x|\leq |c|^2$$
	and
	$$|y|>|c|^{-1}|x|>|x|^{1/2}>|c|, \textrm{ if }  |c|^2<|x|< |c|^3.$$
	Hence, we  obtain a partition $\mathsf{A}_1 \cup \mathsf{A}_2$ of $\mathsf{J}_2$ given by
	$$\mathsf{A}_1=\{(x,y)\in \mathsf{Q} : \ |c|<|x|\leq |c|^2  \textrm{ and } |c|<|y|< (|c||x|)^{1/2}\}$$
	and
	$$\mathsf{A}_2=\{(x,y)\in \mathsf{Q} : \ |c|^2<|x|< |c|^3  \textrm{ and } |c|^{-1}|x|<|y|< (|c||x|)^{1/2}\}.$$

	If $(x,y)\in \mathsf{J}_{2n+2}$, then
	$$\begin{array}{rlcrl}
		i)   &  |y|>(|c||x|^{F_{2n-1}})^{1/F_{2n}};  &  \;\;\;  &      &   \\
		ii)  &  |y|>(|c|^{-1}|x|^{F_{2n}})^{1/F_{2n+1}}  &   &  iv)  &  |y|<(|c||x|^{F_{2n+1}})^{1/F_{2n+2}}.  \\
		iii) &  |y|>|x|^{F_{2n+1}/F_{2n+2}}; & & & \\
	\end{array}$$

	From $i)$ and $iv)$ we have $(|c||x|^{F_{2n-1}})^{1/F_{2n}}< (|c||x|^{F_{2n+1}})^{1/F_{2n+2}}$, that is
	$$|c|^{F_{2n+2}}|x|^{F_{2n+2}F_{2n-1}}< |c|^{F_{2n}}|x|^{F_{2n+1}F_{2n}} \implies |c|^{F_{2n+2}-F_{2n}}< |x|^{F_{2n+1}F_{2n}-F_{2n+2}F_{2n-1}} .$$
	Moreover
	\begin{align*}
		F_{2n+1}F_{2n}-F_{2n+2}F_{2n-1}&=F_{2n+1}F_{2n}-(F_{2n+1}+F_{2n})F_{2n-1}\\
		&= F_{2n+1}F_{2n}-F_{2n+1}F_{2n-1}-F_{2n}F_{2n-1}\\
		&= F_{2n+1}F_{2n}+1-F_{2n}^2-F_{2n}F_{2n-1} \textrm{ (from Relation \ref{fibrelation})} \\
		&= 1+ F_{2n} (F_{2n+1}-F_{2n})-F_{2n}F_{2n-1} \\
		&= 1+ F_{2n}F_{2n-1}-F_{2n}F_{2n-1} \\
		&= 1. \\
	\end{align*}
	Thus, from $i)$ and $iv)$ we have $|x|>|c|^{F_{2n+1}}$.

	From $ii)$ and $iv)$ we have that $(|c|^{-1}|x|^{F_{2n}})^{1/F_{2n+1}}< (|c||x|^{F_{2n+1}})^{1/F_{2n+2}}$, that is
	$$|c|^{-F_{2n+2}}|x|^{F_{2n+2}F_{2n}}< |c|^{F_{2n+1}}|x|^{F_{2n+1}^2} \implies |x|^{F_{2n+2}F_{2n}-F_{2n+1}^2}< |c|^{F_{2n+2}+F_{2n+1}} .$$
	Since from Relation \ref{fibrelation} we have $F_{2n+2}F_{2n}-F_{2n+1}^2=1$, it follows that $|x|<|c|^{F_{2n+3}}$.
	
	It is not necessary to compare $iii)$ and $iv)$.
	
	Finally, using Relation \ref{fibrelation}, we can check that the inequalities $i)$, $ii)$ and $iii)$ of $\mathsf{J}_{2n+2}$ satisfying
	$$ |y|>(|c||x|^{F_{2n-1}})^{1/F_{2n}} \geq |x|^{F_{2n+1}/F_{2n+2}} \geq (|c|^{-1}|x|^{F_{2n}})^{1/F_{2n+1}}, \textrm{ if }  |c|^{F_{2n+1}}<|x|\leq |c|^{F_{2n+2}}$$
	and
	$$|y|>(|c|^{-1}|x|^{F_{2n}})^{1/F_{2n+1}} > |x|^{F_{2n+1}/F_{2n+2}} > (|c||x|^{F_{2n-1}})^{1/F_{2n}}, \textrm{ if }  |c|^{F_{2n+2}}<|x|< |c|^{F_{2n+3}}.$$
	Hence, we may have a partition $\mathsf{A}_{2n+1} \cup \mathsf{A}_{2n+2}$ of $\mathsf{J}_{2n+2}$ given by
	$$\mathsf{A}_{2n+1}=\{(x,y)\in \mathsf{Q} : \ |c|^{F_{2n+1}}<|x|\leq |c|^{F_{2n+2}}  \textrm{ and } (|c||x|^{F_{2n-1}})^{1/F_{2n}} <|y|< (|c||x|^{F_{2n+1}})^{1/F_{2n+2}} \}$$
	and
	$$\mathsf{A}_{2n+2}=\{(x,y)\in \mathsf{Q} : \ |c|^{F_{2n+2}}<|x|< |c|^{F_{2n+3}} \textrm{ and } (|c|^{-1}|x|^{F_{2n}})^{1/F_{2n+1}} <|y|< (|c||x|^{F_{2n+1}})^{1/F_{2n+2}}  \}.$$
\end{proof}

\begin{lemma} \label{Jsubset}
	We have $f^{-1}(\mathsf{J}_i)\subset \mathsf{J}_{i-1}$ for all $i\geq 1$. In particular,
	$$\mathsf{J}:=\bigcup_{i=0}^{+\infty}\mathsf{J}_i \subset \bigcup_{i=0}^{+\infty}f^i(\mathsf{J}_0)\subset \mathcal{K}^-.$$
\end{lemma}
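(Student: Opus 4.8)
The plan is to reduce everything to the single relation $f^{-1}(\mathsf{J}_i)\subset\mathsf{J}_{i-1}$, from which both displayed inclusions follow formally. To prove this relation, fix $i\ge 1$ and take $(x,y)\in\mathsf{J}_i$; writing $(x',y')=f^{-1}(x,y)=\bigl(y,\tfrac{x-c}{y}\bigr)$ we have $|x'|=|y|$ and $|y'|=\tfrac{|x-c|}{|y|}$, and the goal is to check that $(x',y')$ satisfies the three defining inequalities of $\mathsf{J}_{i-1}$. Since $(x,y)\in\mathsf{Q}$ has $y\neq 0$, the point $(x',y')$ is well defined and again lies in $\mathsf{Q}$.

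The first step is to observe that $|x|>|c|$ on every $\mathsf{J}_i$ with $i\ge 1$: for $\mathsf{J}_1$ this is immediate, for $\mathsf{J}_2$ it follows from $|x|<|y|^2<|c||x|$ together with $|y|>|c|$ (whence $|x|>|y|^2/|c|>|c|$), and for the higher sets it is exactly the lower bound $|x|>|c|^{F_{2n}}$ (respectively $|x|>|c|^{F_{2n+1}}$) established in Lemmas \ref{lemajimpar} and \ref{lemajpar}. Consequently $|x-c|=|x|$ and $|y'|=|x|/|y|$. The second step is to substitute $|x'|=|y|$ and $|y'|=|x|/|y|$ into the inequalities defining $\mathsf{J}_{i-1}$: after clearing the powers of $|y|$ in the denominator, every exponent collapses by the Fibonacci recurrence $F_{k}=F_{k-1}+F_{k-2}$, and each inequality turns into one of the defining inequalities of $\mathsf{J}_i$. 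For instance, when $i=2n+1$ the first condition of $\mathsf{J}_{2n}$, namely $|y'|^{F_{2n-2}}>|c||x'|^{F_{2n-3}}$, becomes $|x|^{F_{2n-2}}>|c||y|^{F_{2n-3}+F_{2n-2}}=|c||y|^{F_{2n-1}}$, which is precisely the first defining inequality of $\mathsf{J}_{2n+1}$; the remaining two match in the same way, and the even case $i=2n+2$ is entirely symmetric. The base cases $\mathsf{J}_1$ and $\mathsf{J}_2$ are handled by the same computation upon reading the Fibonacci indices with $F_{-1}=0$ and $F_{-2}=1$. Hence $(x',y')\in\mathsf{J}_{i-1}$.

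Granting $f^{-1}(\mathsf{J}_i)\subset\mathsf{J}_{i-1}$, both inclusions are now formal. Iterating gives $f^{-i}(\mathsf{J}_i)\subset\mathsf{J}_0$; since each point of $\mathsf{J}_i\subset\mathsf{Q}$ satisfies $f^i\!\circ f^{-i}=\mathrm{id}$, we obtain $\mathsf{J}_i=f^i\bigl(f^{-i}(\mathsf{J}_i)\bigr)\subset f^i(\mathsf{J}_0)$, and taking the union over $i$ yields $\mathsf{J}\subset\bigcup_{i\ge 0}f^i(\mathsf{J}_0)$. For the second inclusion, the preceding lemma gives $\mathsf{J}_0\subset\mathcal{K}^-$, and $\mathcal{K}^-$ is forward invariant: if $w\in\mathcal{K}^-$ then the backward orbit of $f(w)$ is the backward orbit of $w$ with the point $w$ itself prepended, which is still bounded, so $f(w)\in\mathcal{K}^-$. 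Induction then gives $f^i(\mathsf{J}_0)\subset\mathcal{K}^-$ for every $i$, whence $\bigcup_{i\ge 0}f^i(\mathsf{J}_0)\subset\mathcal{K}^-$.

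I expect the main obstacle to be purely organizational: keeping track of the Fibonacci-indexed exponents so that the substitution $|x'|=|y|$, $|y'|=|x|/|y|$ really does carry each inequality of $\mathsf{J}_{i-1}$ to the correct inequality of $\mathsf{J}_i$. The one genuinely non-bookkeeping point is the reduction $|x-c|=|x|$, which rests on the bound $|x|>|c|$ on $\mathsf{J}_i$; this is where Cassini's identity (Relation \ref{fibrelation}) enters, through Lemmas \ref{lemajimpar} and \ref{lemajpar}. Once these ingredients are in place the argument is routine.
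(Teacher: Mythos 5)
Your proposal is correct and follows essentially the same route as the paper: both arguments use the bound $|x|>|c|$ on $\mathsf{J}_i$ (via Lemmas \ref{lemajimpar} and \ref{lemajpar}) to get $|y_{-1}|=|x|/|y|$, then substitute $|x_{-1}|=|y|$, $|y_{-1}|=|x|/|y|$ and collapse the exponents with the Fibonacci recurrence to carry each defining inequality of $\mathsf{J}_{i-1}$ back to one of $\mathsf{J}_i$, before concluding formally. The only difference is cosmetic (you work the odd-index case where the paper details the even one, and you spell out the forward invariance of $\mathcal{K}^-$ that the paper leaves implicit).
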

\begin{proof} It is not hard to check that  $f^{-1}(\mathsf{J}_2)\subset \mathsf{J}_{1}$ and $f^{-1}(\mathsf{J}_1)\subset \mathsf{J}_{0}$.
	
Let $n\geq 1$. If $(x,y)\in\mathsf{J}_{2n+2}$, then
$$|y|^{F_{2n}}> |c||x|^{F_{2n-1}}, \ |x|^{F_{2n}}<|c| |y|^{F_{2n+1}} \textrm{ and }  |x|^{F_{2n+1}} <|y|^{F_{2n+2}} < |c||x|^{F_{2n+1}}.$$
Since $(x_{-1},y_{-1}):=f^{-1}(x,y)=\left(y,\frac{x-c}{y }\right)$ and from Lemma \ref{lemajpar}, we have $|x|>|c|$, it follows that $|y_{-1}|=\frac{|x|}{|y|}$.

Hence
$$|y|^{F_{2n}}> |c||x|^{F_{2n-1}} \implies |x_{-1}|^{F_{2n-2}}=|y|^{F_{2n-2}} > |c|\left|\frac{x}{y}\right|^{F_{2n-1}}= |c||y_{-1}|^{F_{2n-1}},$$
$$|x|^{F_{2n}}<|c| |y|^{F_{2n+1}} \implies |y_{-1}|^{F_{2n}}=\left|\frac{x}{y}\right|^{F_{2n}}<|c| |y|^{F_{2n-1}}=|c| |x_{-1}|^{F_{2n-1}}$$
and
$$|x|^{F_{2n+1}} <|y|^{F_{2n+2}} < |c||x|^{F_{2n+1}}\implies |y_{-1}|^{F_{2n+1}} =\left|\frac{x}{y}\right|^{F_{2n+1}} <\underbrace{|y|^{F_{2n}}}_{|x_{-1}|^{F_{2n}}} <|c|\left|\frac{x}{y}\right|^{F_{2n+1}}= |c||y_{-1}|^{F_{2n+1}}$$
Therefore, $f^{-1}(x,y)\in\mathsf{J}_{2n+1}$.

Analogously, it is proved that $f^{-1}(\mathsf{J}_{2n+1})\subset\mathsf{J}_{2n}$.
\end{proof}

Let us consider the sets $\mathsf{F}=\{(x,y)\in \mathsf{Q}: |x|< |c| \textrm{ and } |y|<1\}$,
$$\mathsf{G}=\{(x,y)\in \mathsf{Q}: |x|\leq |c| \textrm{ and } |y|>|c|\} \textrm{ and } \mathsf{H}=\{(x,y)\in \mathsf{Q}: |x|> |c| \textrm{ and } |y|\leq 1\}.$$

See Figure \ref{regiaomjinicial} for the representation of these sets in $p$-adic norm.

\begin{lemma} \label{fatoufgh}
	If $|c|>1$, then $(\mathsf{F}\cup \mathsf{G}\cup \mathsf{H} ) \subset (\mathsf{Q}\setminus \mathcal{K}^-)$.
\end{lemma}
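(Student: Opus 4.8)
The plan is to show that every point of $\mathsf{F}\cup\mathsf{G}\cup\mathsf{H}$ has an unbounded backward orbit, so that it cannot lie in $\mathcal{K}^-$. The first step is to see how $f^{-1}$ permutes the three regions. Using $f^{-1}(x,y)=\left(y,\frac{x-c}{y}\right)$, so that $|x_{-1}|=|y|$ and $|y_{-1}|=\frac{|x-c|}{|y|}$, together with the ultrametric rules $|x-c|=|c|$ when $|x|<|c|$, $|x-c|=|x|$ when $|x|>|c|$, and $|x-c|\le|c|$ when $|x|\le|c|$, a direct check gives
$$f^{-1}(\mathsf{F})\subset\mathsf{G},\qquad f^{-1}(\mathsf{G})\subset\mathsf{H},\qquad f^{-1}(\mathsf{H})\subset\mathsf{G}.$$
Consequently any backward orbit starting in $\mathsf{F}\cup\mathsf{G}\cup\mathsf{H}$ falls into the two-cycle $\mathsf{H}\to\mathsf{G}\to\mathsf{H}\to\cdots$ after at most two steps (immediately from $\mathsf{H}$, after one step from $\mathsf{G}$, after two from $\mathsf{F}$). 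Hence it suffices to establish unboundedness for a point already in $\mathsf{H}$.

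For such a point I would track the even-indexed subsequence, which remains in $\mathsf{H}$. Writing $u_k:=|x_{-2k}|$ and $v_k:=|y_{-2k}|$, the exact norm relations along one full $\mathsf{H}\to\mathsf{G}\to\mathsf{H}$ passage (where $|x_{-2k}|>|c|$ forces $|x_{-2k}-c|=|x_{-2k}|$, and $|x_{-(2k+1)}|=v_k\le 1<|c|$ forces $|x_{-(2k+1)}-c|=|c|$) yield $u_{k+1}=u_k/v_k$ and $v_{k+1}=|c|\,v_k/u_k$. Multiplying gives $u_{k+1}v_{k+1}=|c|$ for all $k\ge 0$, so $v_k=|c|/u_k$ for $k\ge 1$ and therefore $u_{k+1}=u_k^2/|c|$ for $k\ge 1$. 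Since $u_1=u_0/v_0\ge|x_0|>|c|$, setting $w_k:=u_k/|c|$ gives $w_{k+1}=w_k^2$ with $w_1>1$, hence $w_k=w_1^{2^{\,k-1}}\to+\infty$ and $u_k\to+\infty$. Thus $\|f^{-2k}(x,y)\|\ge u_k\to+\infty$, so the backward orbit is unbounded and $(x,y)\notin\mathcal{K}^-$.

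The main point requiring care is the boundary cases where the ultrametric inequality $|x-c|\le|c|$ is not an equality, namely $|x|=|c|$ in $\mathsf{G}$ and $|y|=1$ in $\mathsf{H}$. I would note that these never break the argument: entry into $\mathsf{H}$ always occurs with $|x_{-1}|=|y|>|c|$ \emph{strictly} and $|y_{-1}|=\frac{|x-c|}{|y|}\le\frac{|c|}{|y|}<1$ \emph{strictly}, and once inside $\mathsf{H}$ the induction keeps $u_k>|c|$ and $v_k<1$ strict (indeed $u_{k+1}=u_k^2/|c|>u_k>|c|$ and $v_{k+1}=|c|/u_{k+1}<1$). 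These strict inequalities are exactly what makes the two substitutions $|x_{-2k}-c|=u_k$ and $|x_{-(2k+1)}-c|=|c|$ hold with equality, so the recurrence $u_{k+1}=u_k^2/|c|$ is genuine rather than merely an inequality, and the divergence is rigorous. Finally, existence of the whole backward orbit is free since the points are taken in $\mathsf{Q}$, so only unboundedness is at issue.
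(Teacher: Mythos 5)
Your proof is correct and follows essentially the same route as the paper: the same inclusions $f^{-1}(\mathsf{F})\subset\mathsf{G}$, $f^{-1}(\mathsf{G})\subset\mathsf{H}$, $f^{-1}(\mathsf{H})\subset\mathsf{G}$, followed by the same doubly exponential growth of the norm along the alternating $\mathsf{G}\leftrightarrow\mathsf{H}$ cycle. The only cosmetic difference is that you normalize at $\mathsf{H}$ and derive the exact recurrence $u_{k+1}=u_k^2/|c|$, whereas the paper starts in $\mathsf{G}$ and tracks one-sided bounds on the exponents; both give the same double-exponential lower bound on $\|f^{-n}(x,y)\|$.
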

\begin{proof} It is not hard to prove that $f^{-1}(\mathsf{F})\subset \mathsf{G}$.
	
	Furthermore, we have that $f^{-1}(\mathsf{G})\subset \mathsf{H}$, $f^{-1}(\mathsf{H})\subset \mathsf{G}$ and $(\mathsf{G}\cup \mathsf{H}) \subset (\mathsf{Q}\setminus \mathcal{K}^-)$. In fact, if $(x,y)\in \mathsf{G}$ then
	$$(x_{-1},y_{-1})=\left(y,\frac{x-c}{y}\right), \textrm{ that is } |x_{-1}|>|c| \textrm{ and } |y_{-1}|\leq 1 $$
	that is $(x_{-1},y_{-1})\in \mathsf{H}$.
	
	Also, if $(x,y)\in \mathsf{H}$, then $|x_{-1}|=|y|\leq 1$ and $|y_{-1}|=\left|\frac{x-c}{y}\right| >|c|$, that is $(x_{-1},y_{-1})\in \mathsf{G}$.
	
	Now, set $|c|=p^d$ and let $(x,y)\in \mathsf{G}$ with $|x|=p^a$ and $|y|=p^b$, where $a,b,d\in\mathbb{Z}_+$ with $d>0$. Thus, since $(x,y)\in \mathsf{G}$, it follows that $a\leq d <b$ and $$
	\begin{array}{lclcclcl}
		|x_{-1}|=p^b & \textrm{and} & |y_{-1}| \leq p^{d-b}, & & & |x_{-2}| \leq p^{d-b} & \textrm{and} & |y_{-2}|\geq p^{2b-d} ,\\
		|x_{-3}|\geq p^{2b-d} & \textrm{and} & |y_{-3}| \leq p^{2d-2b}, & & & |x_{-4}| \leq  p^{2d-2b} & \textrm{and} & |y_{-4}|\geq p^{4b-3d} ,\\
		|x_{-5}|\geq p^{4b-3d} & \textrm{and} & |y_{-5}| \leq p^{4d-4b}, & & & |x_{-6}| \leq  p^{4d-4b} & \textrm{and} & |y_{-6}|\geq p^{8b-7d}.\\
	\end{array}
	$$
	Continuing in this way, we have that
	$$
	\begin{array}{lclcclcl}
		|x_{-2i-1}|\geq p^{2^ib-(2^i-1)d} & \textrm{ and } & |y_{-2i-1}| \leq p^{2^id-2^ib}, \\
		&      & \\
		|x_{-2i-2}| \leq  p^{2^id-2^ib} & \textrm{ and }  & |y_{-2i-2}|\geq p^{2^{i+1}b-(2^{i+1}-1)d},
	\end{array}
	$$
	for all $i\geq 0$. Hence, $\|(x_{-n},y_{-n})\| \geq p^{2^{\lfloor n/2 \rfloor}(b-d)+d}$ which goes to infinity when $n\to +\infty$, since $b-d>0$.
	Therefore, $(x,y)\in \mathsf{Q}\setminus \mathcal{K}^-$.
\end{proof}

Set $F_{-1}=0$. For each $i\geq 0$, let us consider the sets $\mathsf{M}_i$ defined by
$$\mathsf{M}_{2n+1}:=\{(x,y)\in\mathsf{Q}: |x|>|c|^{F_{2n}}, \; |c|^{F_{2n-1}}<|y|\leq |c|^{F_{2n+1}}   \textrm{ and } |y|<(|c|^{-1}|x|^{F_{2n}})^{1/F_{2n+1}} \}$$
and
$$\mathsf{M}_{2n+2}:=\{(x,y)\in\mathsf{Q}: |c|^{F_{2n+1}}<|x|\leq |c|^{F_{2n+3}}   \textrm{ and } |y|>(|c||x|^{F_{2n+1}})^{1/F_{2n+2}} \}$$
for all $n\geq 0$.

See Figures \ref{regiaomjinicial} and \ref{regiaomj} for the representation of these sets in $p$-adic norm.

\begin{figure}[!h]
	\centering
	\includegraphics[scale=0.17]{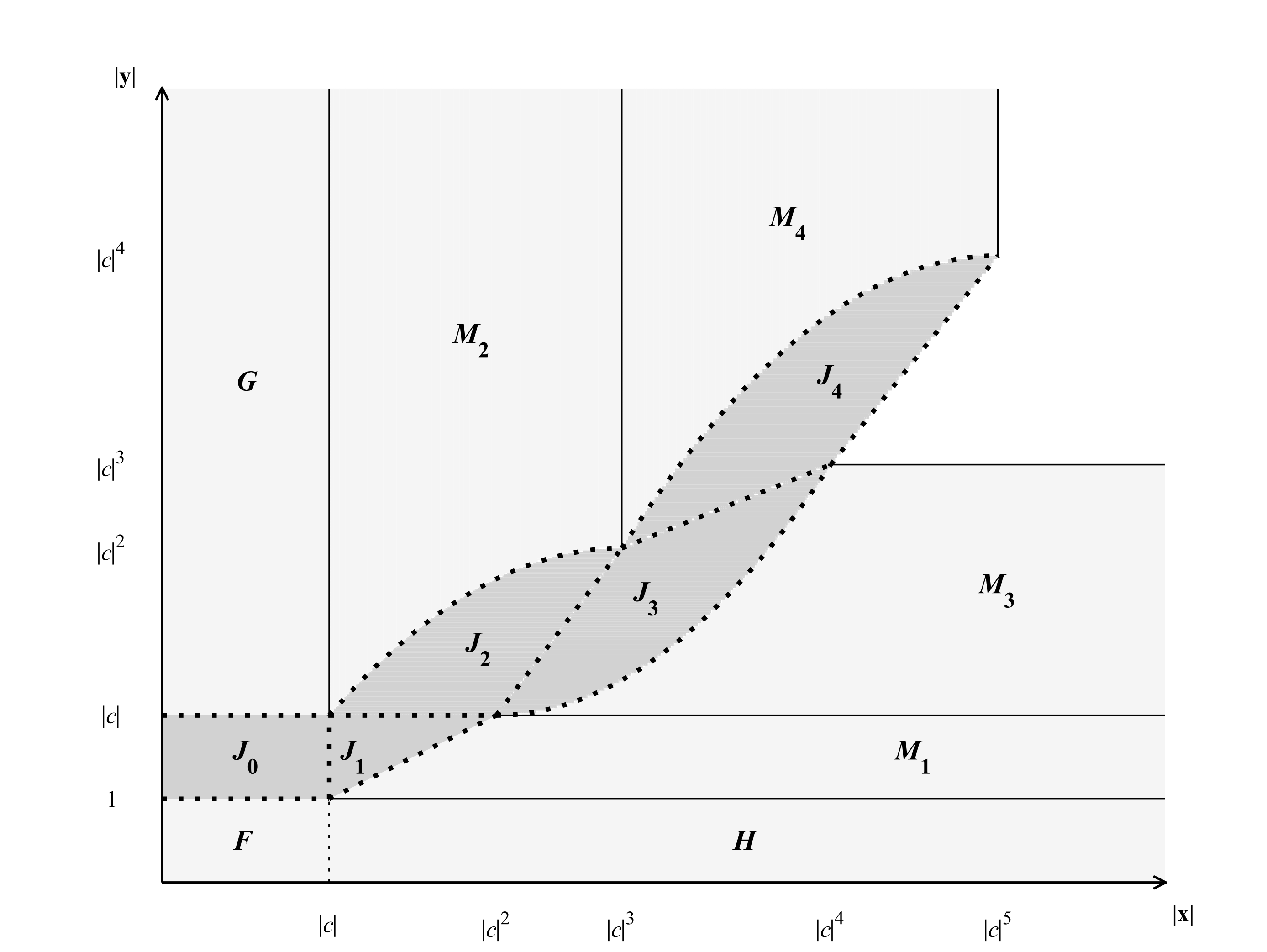}
	\caption{Representation of the sets $\mathsf{F}$, $\mathsf{G}$, $\mathsf{H}$, $\mathsf{J}_i$ and $\mathsf{M}_i$ in $p$-adic norm, for $|c|>1$ and $i=1,2,3,4$.} \label{regiaomjinicial}
\end{figure} 
\begin{figure}[!h]
	\centering
	\includegraphics[scale=0.17]{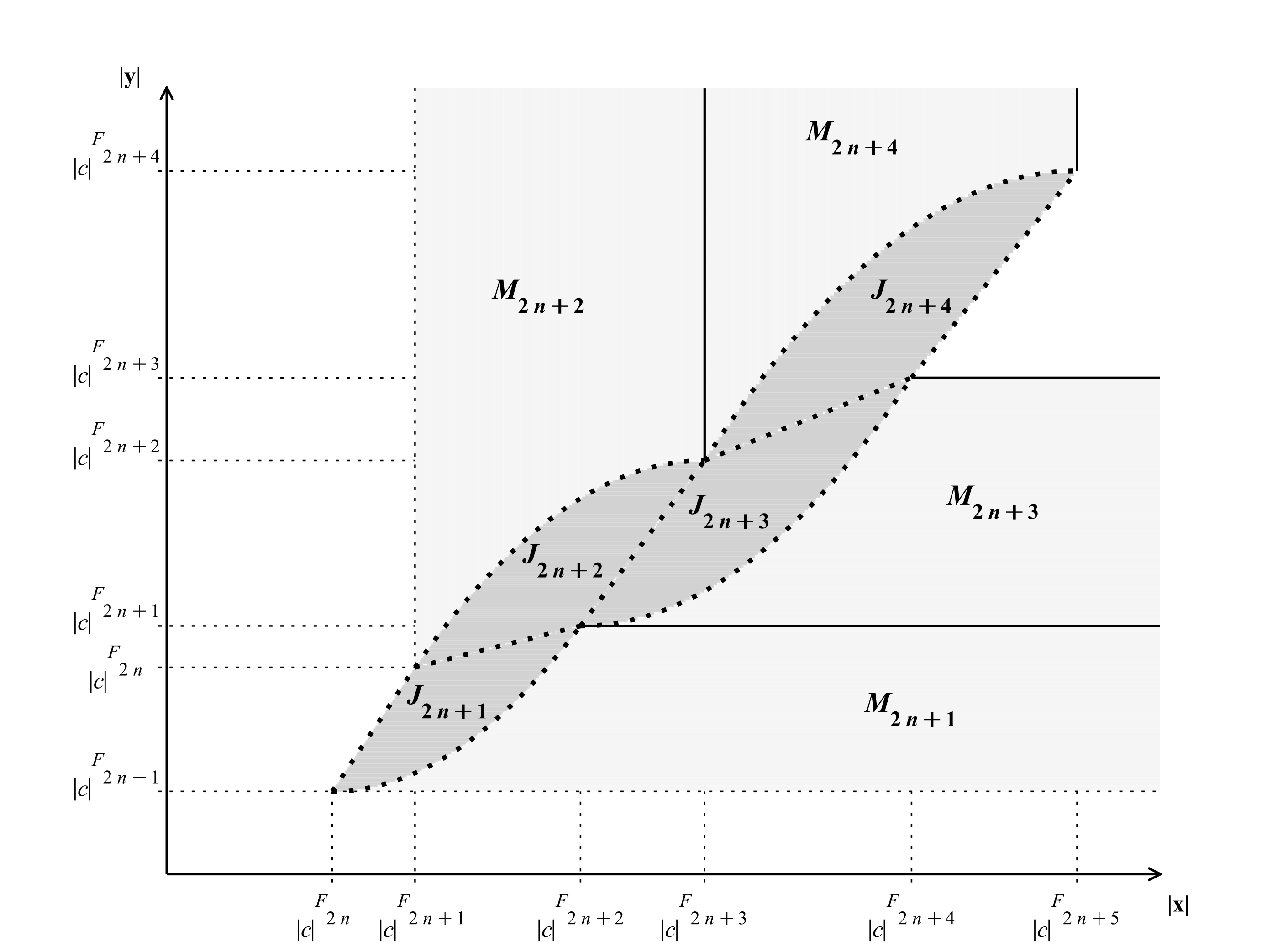}
	\caption{Representation of $\mathsf{J}_i$ and $\mathsf{M}_i$ in $p$-adic norm, for $|c|>1$.} \label{regiaomj}
\end{figure} 

\begin{lemma}
	We have $f^{-1}(\mathsf{M}_2)\subset \mathsf{M}_1$, $f^{-1}(\mathsf{M}_1)\subset \mathsf{G}$ and
$$f^{-1}(\mathsf{M}_{2n+2})\subset \mathsf{H}\cup \bigcup_{i=0}^n\mathsf{M}_{2i+1} \; \textrm{ and } \; f^{-1}(\mathsf{M}_{2n+1})\subset \mathsf{M}_{2n}, \; \textrm{ for all } n\geq 1$$
that is $$\mathsf{M}:=\bigcup_{i=1}^{+\infty} \mathsf{M}_i \subset \mathsf{Q}\setminus\mathcal{K}^-.$$
\end{lemma}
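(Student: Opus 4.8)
The plan is to reduce every inclusion to a computation with $p$-adic absolute values and then settle membership using the Cassini-type identities (\ref{fibrelation}) and (\ref{fibrelation2}). Write $f^{-1}(x,y)=\left(y,\frac{x-c}{y}\right)$, so that one backward step acts on norms by $|x_{-1}|=|y|$ and $|y_{-1}|=\frac{|x-c|}{|y|}$. On each $\mathsf{M}_i$ I would first check that $|x|>|c|$: for $\mathsf{M}_{2n+1}$ this is immediate from $|x|>|c|^{F_{2n}}$, while for $\mathsf{M}_{2n+2}$ the lower bound $|y|>(|c||x|^{F_{2n+1}})^{1/F_{2n+2}}$ together with $|x|>|c|^{F_{2n+1}}$ and the identity $1+F_{2n+1}^2=F_{2n+2}F_{2n}$ (from (\ref{fibrelation})) gives $|y|>|c|^{F_{2n}}\ge|c|$, hence $|x_{-1}|=|y|>|c|$. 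Consequently $|x-c|=|x|$ throughout, and the workhorse identity becomes $|y_{-1}|=\frac{|x|}{|y|}$.

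With this identity the two ``descending'' inclusions are direct substitutions. For $f^{-1}(\mathsf{M}_{2n+1})\subset\mathsf{M}_{2n}$ ($n\ge1$), the image has $|x_{-1}|=|y|$ already lying in the band $|c|^{F_{2n-1}}<|x_{-1}|\le|c|^{F_{2n+1}}$, and the lower bound $|y_{-1}|>(|c||x_{-1}|^{F_{2n-1}})^{1/F_{2n}}$ rearranges, using $F_{2n}+F_{2n-1}=F_{2n+1}$, into exactly the defining inequality $|y|<(|c|^{-1}|x|^{F_{2n}})^{1/F_{2n+1}}$ of $\mathsf{M}_{2n+1}$. Likewise $f^{-1}(\mathsf{M}_1)\subset\mathsf{G}$ reduces to $|x_{-1}|=|y|\le|c|$ and $|y_{-1}|=\frac{|x|}{|y|}>|c|$, the latter being the condition $|y|<|c|^{-1}|x|$. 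For the base case $f^{-1}(\mathsf{M}_2)$, three of the four conditions of $\mathsf{M}_1$ hold verbatim and the image lands in $\mathsf{M}_1$ exactly when $|y_{-1}|=\frac{|x|}{|y|}>1$; when instead $|y_{-1}|\le1$ it lands in $\mathsf{H}$, so strictly $f^{-1}(\mathsf{M}_2)\subset\mathsf{M}_1\cup\mathsf{H}$ (the $n=0$ instance of the even inclusion below). This does not affect the conclusion, since both $\mathsf{M}_1$ (through $\mathsf{G}$) and $\mathsf{H}$ lie outside $\mathcal{K}^-$.

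The crux is $f^{-1}(\mathsf{M}_{2n+2})\subset\mathsf{H}\cup\bigcup_{i=0}^n\mathsf{M}_{2i+1}$, which I would prove by splitting on the size of $|y_{-1}|=\frac{|x|}{|y|}$. If $|y_{-1}|\le1$, then since $|x_{-1}|=|y|>|c|$ the image lies in $\mathsf{H}$. If $|y_{-1}|>1$, the nested thresholds $(|c|^{F_{2i-1}},|c|^{F_{2i+1}}]$, $i\ge0$, partition $(1,+\infty)$ (here $F_{-1}=0$), so the middle band condition of $\mathsf{M}_{2i+1}$ singles out a unique index $i$. One then shows $i\le n$: the bounds $|x|\le|c|^{F_{2n+3}}$ and $|y|>(|c||x|^{F_{2n+1}})^{1/F_{2n+2}}$ give $|y_{-1}|<|c|^{(F_{2n+3}F_{2n}-1)/F_{2n+2}}=|c|^{F_{2n+1}}$, where the exponent is evaluated by (\ref{fibrelation2}). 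The condition $|x_{-1}|>|c|^{F_{2i}}$ holds because $|x_{-1}|=|y|>|c|^{F_{2n}}\ge|c|^{F_{2i}}$, and the last condition $|y_{-1}|<(|c|^{-1}|x_{-1}|^{F_{2i}})^{1/F_{2i+1}}$ rearranges into $|y|>(|c||x|^{F_{2i+1}})^{1/F_{2i+2}}$; this follows from the $\mathsf{M}_{2n+2}$ lower bound once one checks $(|c||x|^{F_{2n+1}})^{1/F_{2n+2}}\ge(|c||x|^{F_{2i+1}})^{1/F_{2i+2}}$ for $i\le n$, which after clearing exponents amounts to $F_{2n+2}-F_{2i+2}\le(\log_{|c|}|x|)\,(F_{2n+1}F_{2i+2}-F_{2i+1}F_{2n+2})$ with $\log_{|c|}|x|>F_{2n+1}$. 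The d'Ocagne-type identity $F_{2n+1}F_{2i+2}-F_{2i+1}F_{2n+2}=F_{2(n-i)-1}\ge0$ (derivable from (\ref{fibrelation})) makes this transparent, with the boundary case $i=n$ giving equality. I expect the main obstacle to be exactly this bookkeeping: verifying that the bands tile the image region with no gaps and that the selected index always satisfies $0\le i\le n$, which is precisely where the Fibonacci identities are indispensable.

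Finally I would assemble the conclusion by descent on indices. From an odd set $\mathsf{M}_{2n+1}$ one backward step enters $\mathsf{M}_{2n}=\mathsf{M}_{2(n-1)+2}$, and the even inclusion then sends the orbit either into $\mathsf{H}$ or into some $\mathsf{M}_{2i+1}$ with $i\le n-1$, a strictly smaller odd index; iterating, the orbit reaches $\mathsf{M}_1$, whence $f^{-1}(\mathsf{M}_1)\subset\mathsf{G}$. Thus for every $(x,y)\in\mathsf{M}$ there is $k\ge1$ with $f^{-k}(x,y)\in\mathsf{G}\cup\mathsf{H}$. By Lemma \ref{fatoufgh} the backward orbit issuing from $\mathsf{G}\cup\mathsf{H}$ has norm tending to $+\infty$, so $\|f^{-m}(x,y)\|\to+\infty$; since moreover $(x,y)\in\mathsf{M}_i\subset\mathsf{Q}$, we conclude $(x,y)\in\mathsf{Q}\setminus\mathcal{K}^-$, that is $\mathsf{M}\subset\mathsf{Q}\setminus\mathcal{K}^-$.
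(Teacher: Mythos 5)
Your proof is correct and follows essentially the same route as the paper: reduce each inclusion to the norm identities $|x_{-1}|=|y|$ and $|y_{-1}|=|x|/|y|$ (valid because $|x|>|c|$ on every $\mathsf{M}_i$), verify the defining inequalities of the target set via the Cassini-type relations, and conclude by descending through the $\mathsf{M}_i$ into $\mathsf{G}\cup\mathsf{H}$ and invoking Lemma \ref{fatoufgh}. You are in fact slightly more careful than the paper in two spots: the paper's own computation likewise only yields $f^{-1}(\mathsf{M}_2)\subset\mathsf{H}\cup\mathsf{M}_1$ (harmless, as you observe), and your check that a point of $f^{-1}(\mathsf{M}_{2n+2})$ falling into a lower band genuinely satisfies the third defining condition of $\mathsf{M}_{2i+1}$ supplies a detail the paper leaves implicit.
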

\begin{proof} If $(x,y)\in \mathsf{M}_{1}$, then
$$ |x|>|c|, \; 1<|y|\leq |c|  \textrm{ and } |y|<|c|^{-1}|x|.$$
Thus,
$$ |x_{-1}|=|y|\leq |c| \textrm{ and } |y_{-1}|=\frac{|x|}{|y|}> |c|$$
that is, $f^{-1}(x,y)\in \mathsf{G}$.

If $(x,y)\in \mathsf{M}_{2}$, then
$$ |c|<|x|\leq |c|^3 \textrm{ and } |y|>(|c||x|)^{1/2}.$$
Thus,
$$ |x_{-1}|=|y|>(|c||x|)^{1/2}>|c|,$$
$$|x|<|c|^{-1}|y|^2 \implies |y_{-1}|=\frac{|x|}{|y|}< |c|^{-1}|y|=|c|^{-1}|x_{-1}|$$
and
$$|y_{-1}|=\frac{|x|}{|y|}<\frac{|x|}{(|c||x|)^{1/2}}= \left(\frac{|x|}{|c|}\right)^{1/2}\leq \left(\frac{|c|^3}{|c|}\right)^{1/2}=|c|$$
that is, $f^{-1}(x,y)\in \mathsf{H}\cup \mathsf{M}_1$.

Now, let $n\geq 1$. If $(x,y)\in \mathsf{M}_{2n+2}$, then
$$|c|^{F_{2n+1}}<|x|\leq |c|^{F_{2n+3}}   \textrm{ and } |y|>(|c||x|^{F_{2n+1}})^{1/F_{2n+2}}.$$
Thus,
$$|x_{-1}|=|y|>(|c||c|^{F^2_{2n+1}})^{1/F_{2n+2}} = |c|^{F_{2n}}$$ 
since  $F^2_{2n+1}= F_{2n+2}F_{2n}-1$, by Relation \ref{fibrelation}, 
$$|y_{-1}|=\frac{|x|}{|y|}<\left(\frac{|x|^{F_{2n+2}}}{|c||x|^{F_{2n+1}}}\right)^{1/F_{2n+2}}= (|c|^{-1}|x|^{F_{2n}})^{1/F_{2n+2}}\leq  (|c|^{-1}|c|^{F_{2n+3}F_{2n}})^{1/F_{2n+2}}=|c|^{F_{2n+1}}$$
since  $F_{2n+3}F_{2n}=F_{2n+2}F_{2n+1}+1$, by Relation \ref{fibrelation2}, and
$$|x|^{F_{2n+1}}<|c|^{-1}|y|^{F_{2n+2}} \implies  \left(\frac{|x|}{|y|}\right)^{F_{2n+1}}<|c|^{-1}|y|^{F_{2n}} \implies |y_{-1}|<(|c|^{-1}|x_{-1}|^{F_{2n}})^{1/F_{2n+1}} $$
since $|y_{-1}|= \frac{|x|}{|y|}$ and $|x_{-1}|=|y|$. Therefore, $f^{-1}(x,y)\in \mathsf{H}\cup \bigcup_{i=0}^n\mathsf{M}_{2n+1}$.

Finally, if $(x,y)\in \mathsf{M}_{2n+1}$, then
$$|x|>|c|^{F_{2n}}, \; |c|^{F_{2n-1}}<|y|\leq |c|^{F_{2n+1}}   \textrm{ and } |y|<(|c|^{-1}|x|^{F_{2n}})^{1/F_{2n+1}}.$$
Thus,
$$ |c|^{F_{2n-1}}<|y|=|x_{-1}|\leq |c|^{F_{2n+1}} $$ 
and
$$|x|^{F_{2n}}>|c||y|^{F_{2n+1}} \implies  \left(\frac{|x|}{|y|}\right)^{F_{2n}}>|c||y|^{F_{2n-1}} \implies |y_{-1}|>(|c||x_{-1}|^{F_{2n-1}})^{1/F_{2n}} $$
since $|y_{-1}|= \frac{|x|}{|y|}$ and $|x_{-1}|=|y|$. Therefore, $f^{-1}(x,y)\in \mathsf{M}_{2n}$.
\end{proof}

By construction, let us consider a partition of $\mathsf{Q}\setminus (\mathsf{F}\cup \mathsf{G}\cup \mathsf{H} \cup \mathsf{J} \cup \mathsf{M})$ given by
$$\mathsf{C}=\bigcup_{i=0}^{+\infty} (\mathsf{C}_i \cup \mathsf{D}_{i+2}),$$ where
$$\mathsf{C}_0:=\{(x,y)\in\mathsf{Q}: |x|<|c| \textrm{ and } (|y|=1 \textrm{ or } |y|=|c|)\} \cup \{(x,y)\in\mathsf{Q}: |x|=|c| \textrm{ and } |y|\leq|c|\},$$

$$\mathsf{C}_{2n+1}:=\{(x,y)\in\mathsf{Q}: |c|^{F_{2n}}<|x|\leq|c|^{F_{2n+2}} \textrm{ and } |y|=(|c|^{-1}|x|^{F_{2n}})^{1/F_{2n+1}} \}$$
and 
$$\mathsf{C}_{2n+2}:=\{(x,y)\in\mathsf{Q}: |c|^{F_{2n+1}}<|x|\leq|c|^{F_{2n+3}} \textrm{ and } |y|=(|c||x|^{F_{2n+1}})^{1/F_{2n+2}} \},$$
for $n\geq 0$,
$$\mathsf{D}_2:=\{(x,y)\in\mathsf{Q}: |c|<|x|<|c|^2 \textrm{ and } |y|=|c|\},$$

$$\mathsf{D}_{2n+1}:=\{(x,y)\in\mathsf{Q}: |c|^{F_{2n}}<|x|<|c|^{F_{2n+1}} \textrm{ and } |y|=(|c|^{-1}|x|^{F_{2n-2}})^{1/F_{2n-1}} \}$$
and
$$\mathsf{D}_{2n+2}:=\{(x,y)\in\mathsf{Q}: |c|^{F_{2n+1}}<|x|<|c|^{F_{2n+2}} \textrm{ and } |y|=(|c||x|^{F_{2n-1}})^{1/F_{2n}} \}$$
for $n\geq 1$.

In fact, for example, if $(x,y)\in\mathsf{Q}$ with $ |c|^{F_{2n+1}}<|x|\leq|c|^{F_{2n+2}}$, then:

\begin{itemize}	
	\item if $|y|< (|c|^{-1}|x|^{F_{2n}})^{1/F_{2n+1}}$, then $(x,y)\in \mathsf{H} \cup \bigcup_{i=0}^n \mathsf{M}_{2i+1}$;
	
	\item if $|y|=(|c|^{-1}|x|^{F_{2n}})^{1/F_{2n+1}}$, then $(x,y)\in \mathsf{C}_{2n+1}$;
	
	\item if $(|c|^{-1}|x|^{F_{2n}})^{1/F_{2n+1}} <|y|< (|c||x|^{F_{2n-1}})^{1/F_{2n}} $, then $(x,y)\in \mathsf{B}_{2n+1} \subset \mathsf{J}_{2n+1}$;
	
	\item if $|y|= (|c||x|^{F_{2n-1}})^{1/F_{2n}} $, then $(x,y)\in \mathsf{D}_{2n+2}$;
	
	\item if $(|c||x|^{F_{2n-1}})^{1/F_{2n}} <|y|< (|c||x|^{F_{2n+1}})^{1/F_{2n+2}}$, then $(x,y)\in \mathsf{A}_{2n+1} \subset \mathsf{J}_{2n+2}$;
	
	\item if $|y|= (|c||x|^{F_{2n+1}})^{1/F_{2n+2}}$, then $(x,y)\in \mathsf{C}_{2n+2}$;

	\item if $|y|> (|c||x|^{F_{2n+1}})^{1/F_{2n+2}}$, then $(x,y)\in \mathsf{M}_{2n+2}$.
\end{itemize}

\noindent \textbf{Conclusion:} We have proved that $$\mathsf{Q}= \mathsf{C} \cup \mathsf{J} \cup \mathsf{F} \cup \mathsf{G} \cup \mathsf{H} \cup \mathsf{M} $$
where ($\mathsf{F} \cup \mathsf{G} \cup \mathsf{H} \cup \mathsf{M})\subset (\mathsf{Q}\setminus \mathcal{K}^- )$ and $\mathsf{J}\subset\mathcal{K}^-$, finishing the proof of theorem.
\end{proof}

\begin{proof}[Proof of item b) of Theorem \ref{teokmenosmaior}] From item a) of Theorem \ref{teokmenosmaior}, we have
$$\displaystyle \bigcup_{i=0}^{+\infty} f^n(\mathsf{J_0})\subset \mathcal{K}^- \subset \mathsf{C}\cup \bigcup_{i=0}^{+\infty} f^n(\mathsf{J_0})$$
where
$$\mathsf{J}_0=\{(x,y)\in \mathsf{Q} ;\ |x|<|c|,\ 1<|y|<|c|\}.$$
Since there is no $p$-adic number with $p$-adic norm between $1$ and $p$, it follows that if $|c|=p$, then $J_0=\emptyset$, and the proof is done.
\end{proof}

\begin{proof}[Proof of item c) of Theorem \ref{teokmenosmaior}] Recall that the Haar measure $\mu$ of a closed ball in $\mathbb{Q}_p$ is equal to its radius, that is, $\mu(\overline{B}_r(a)) = r$, where $\overline{B}_r(a)=\{x\in\mathbb{Q}_p: |x-a|_p\leq r\}$. Furthermore, the Haar measure $\mu_2$ on $(\mathbb{Q}_p^2,\mathcal{B}(\mathbb{Q}_p^2))$, where $\mathcal{B}(\mathbb{Q}_p^2)$ is the Borel $\sigma$-algebra of $\mathbb{Q}_p^2$, agrees with the product measure, that is $\mu_2=\mu \times \mu$.
	
Since $|c|>p$, let $k\geq 2$ such that $|c|=p^k$.

For each integer number $n>0$, let us consider the set $$\mathsf{T}_n:=\{(x,y)\in\mathbb{Q}_p^2: |x|=p^{(k-1)F_{n+1}} \textrm{ and } |y|=p^{(k-1)F_{n}} \}.$$
Thus, it is easy to check that $f^{-1}(\mathsf{T_{n}})\subset \mathsf{T}_{n-1}$, since $(x,y)\in \mathsf{T}_n$ implies $|x|>|c|=p^k$, for all $i\geq 1$.

On the other hand, if $(x,y)\in \mathsf{T}_0$, then $|x|=|y|=p^{k-1}$ and
$$|x_{-1}|=|y|=p^{k-1} \textrm{ and } |y_{-1}|=\frac{|x+c|}{|y|}=\frac{|c|}{|y|}=p, $$
and
$$|x_{-2}|=|y_{-1}|=p \textrm{ and } |y_{-2}|=\frac{|x_{-1}+c|}{|y_{-1}|}=\frac{|c|}{|y_{-1}|}=p^{k-1} .$$
Continuing in this way, we have
$$|y_{-2n}|=|x_{-2n+1}|=p^{k-1} \textrm{ and } |x_{-2n}|=|y_{-2n+1}|=p$$
for all $n\geq 1$.

Therefore, 
 $$\mathsf{T}:=\bigcup_{n=0}^{+\infty}\mathsf{T}_n\subset\mathcal{K}^{-}.$$

Hence, since $(\mathsf{T}_n)_{n\geq 0}$ are disjoint subsets, it follows that
$$\mu_2(\mathcal{K}^-)\geq \mu_2(\mathsf{T})= \sum_{n=0}^{+\infty} \mu_2(\mathsf{T}_n) =\sum_{n=0}^{+\infty}\mu(\overline{B}(0,p^{(k-1)F_{n+1}})) \cdot \mu(\overline{B}(0,p^{(k-1)F_{n}}))$$
that is
$$\mu_2(\mathcal{K}^-)\geq \sum_{n=0}^{+\infty}p^{(k-1)F_{n+1}} \cdot p^{(k-1)F_{n}}= \sum_{n=0}^{+\infty}p^{(k-1)F_{n+2}} =+\infty.$$
\end{proof}


\section{Proof of Theorem \ref{teokmenosum}} \label{provakmenosigual}

Let us consider the sets $\mathsf{F}=\{(x,y)\in \mathsf{Q}: |x|< 1 \textrm{ and } |y|<1\}$,
$$\mathsf{G}=\{(x,y)\in \mathsf{Q}: |x|\leq 1 \textrm{ and } |y|>1\} \textrm{ and } \mathsf{H}=\{(x,y)\in \mathsf{Q}: |x|> 1 \textrm{ and } |y|\leq 1\}.$$

Analogously to Lemma \ref{fatoufgh}, we may prove the following:

\begin{lemma} \label{lemac1fgh}
	If $|c|=1$, then $(\mathsf{F}\cup \mathsf{G}\cup \mathsf{H} ) \subset (\mathsf{Q}\setminus \mathcal{K}^-)$.
\end{lemma}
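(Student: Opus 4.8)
The plan is to mirror the proof of Lemma~\ref{fatoufgh}, with the role played there by $|c|>1$ now taken by the hypothesis $|c|=1$. First I would record the three orbit-tracking inclusions
$$f^{-1}(\mathsf{F})\subset\mathsf{G},\qquad f^{-1}(\mathsf{G})\subset\mathsf{H},\qquad f^{-1}(\mathsf{H})\subset\mathsf{G},$$
each of which follows by a one-line computation from $f^{-1}(x,y)=\left(y,\frac{x-c}{y}\right)$ together with the ultrametric inequality and $|c|=1$. For example, if $(x,y)\in\mathsf{H}$ then $|x|>1=|c|$ forces $|x-c|=|x|$, so $|x_{-1}|=|y|\le 1$ and $|y_{-1}|=|x|/|y|\ge|x|>1$, placing $f^{-1}(x,y)$ in $\mathsf{G}$. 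The inclusions $f^{-1}(\mathsf{F})\subset\mathsf{G}$ and $f^{-1}(\mathsf{G})\subset\mathsf{H}$ are analogous, using $|x-c|=1$ when $|x|<1$ and $|x-c|\le 1$ when $|x|\le 1$.

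Having these inclusions, the second step shows that every point of $\mathsf{G}\cup\mathsf{H}$ escapes to infinity under backward iteration. Since $f^{-1}$ sends $\mathsf{G}$ into $\mathsf{H}$ and $\mathsf{H}$ into $\mathsf{G}$, the backward orbit of any $(x,y)\in\mathsf{G}$ alternates between $\mathsf{G}$ and $\mathsf{H}$ and is defined for all $n$. Writing $|x|=p^a$ and $|y|=p^b$ with $a\le 0<b$, the same exponent bookkeeping as in Lemma~\ref{fatoufgh} (now with $|c|=p^0$) gives, for every $i\ge 0$,
$$|x_{-2i-1}|\ge p^{2^i b},\quad |y_{-2i-1}|\le p^{-2^i b},\quad |x_{-2i-2}|\le p^{-2^i b},\quad |y_{-2i-2}|\ge p^{2^{i+1}b}.$$
Consequently $\|f^{-n}(x,y)\|\ge p^{2^{\lfloor n/2\rfloor}b}$, which tends to $+\infty$ because $b>0$; hence $\mathsf{G}\cup\mathsf{H}\subset\mathsf{Q}\setminus\mathcal{K}^-$.

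For $(x,y)\in\mathsf{F}$ the inclusion $f^{-1}(\mathsf{F})\subset\mathsf{G}$ places the first backward iterate in $\mathsf{G}$, so the preceding paragraph applies to $f^{-1}(x,y)$ and the backward orbit of $(x,y)$ is again unbounded. This yields $\mathsf{F}\subset\mathsf{Q}\setminus\mathcal{K}^-$, and combining the three cases gives $\mathsf{F}\cup\mathsf{G}\cup\mathsf{H}\subset\mathsf{Q}\setminus\mathcal{K}^-$, as claimed.

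The only point requiring care is the norm of $y_{-1}=\frac{x-c}{y}$ when $|x|=1=|c|$, where cancellation could shrink $|x-c|$ below $1$. This is harmless for the argument: inside $\mathsf{G}$ we only need the upper bound $|x-c|\le 1$, which still yields $|y_{-1}|\le 1/|y|$; inside $\mathsf{H}$ the inequality $|x|>1$ is strict, so $|x-c|=|x|$ with no cancellation, and this is precisely the step that amplifies the norm. Thus the doubling mechanism—whereby a large $|y|$ is shuffled into the first coordinate and then divided by a small value at the next backward step—carries over verbatim, and verifying this amplification is the heart of the proof.
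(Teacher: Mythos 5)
Your proposal is correct and is exactly the argument the paper intends: the paper proves this lemma only by remarking that it is "analogous to Lemma~\ref{fatoufgh}," and your writeup is precisely that adaptation, specializing the exponent bookkeeping of Lemma~\ref{fatoufgh} to $d=0$ (i.e.\ $|c|=p^0$) and correctly noting that only the upper bound $|x-c|\le 1$ is needed in $\mathsf{G}$ while the strict inequality $|x|>1$ in $\mathsf{H}$ forces $|x-c|=|x|$ and drives the doubling of exponents.
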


Set $F_{-2}=1$ and $F_{-1}=0$. For each $i\geq 1$, let us consider the sets $\mathsf{M}_i$ defined by
$$\mathsf{M}_{2n+1}:=\{(x,y)\in\mathsf{Q}: |x|^{F_{2n-2}}>|y|^{F_{2n-1}}, \;  |y|^{F_{2n}}>|x|^{F_{2n-1}} \textrm{ and } |x|^{F_{2n}}\leq |y|^{F_{2n+1}} \}$$
and
$$\mathsf{M}_{2n+2}:=\{(x,y)\in\mathsf{Q}: |y|^{F_{2n}}>|x|^{F_{2n-1}}, \;  |x|^{F_{2n}}>|y|^{F_{2n+1}} \textrm{ and } |y|^{F_{2n+2}}\leq |x|^{F_{2n+1}} \}$$
for all $n\geq 0$.

\begin{lemma} \label{lemac1m}
We have $f^{-1}(\mathsf{M}_1)\subset \mathsf{H}$,
$$f^{-1}(\mathsf{M}_{2n+2})\subset \mathsf{M}_{2n+1} \; \textrm{ and } \; f^{-1}(\mathsf{M}_{2n+1})\subset \mathsf{M}_{2n}, \; \textrm{ for all } n\geq 1$$
	that is $$\mathsf{M}:=\bigcup_{i=1}^{+\infty} \mathsf{M}_i \subset \mathsf{Q}\setminus\mathcal{K}^-.$$
\end{lemma}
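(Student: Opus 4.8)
The plan is to run a finite-descent argument: show that $f^{-1}$ pushes each $\mathsf{M}_i$ one index lower, so that after finitely many backward steps every point of $\mathsf{M}$ enters $\mathsf{H}$, which already lies in $\mathsf{Q}\setminus\mathcal{K}^-$ by Lemma \ref{lemac1fgh}. The computational engine is $f^{-1}(x,y)=\left(y,\frac{x-c}{y}\right)$. First I would record the normalization that makes every step routine: on each $\mathsf{M}_i$ one has $|x|>1$ (and in fact $|y|>1$). I would deduce this from the defining inequalities by passing to $p$-adic valuations $a=\log|x|$, $b=\log|y|$: the two ``outer'' inequalities trap the ratio $a/b$ in an interval whose endpoints are consecutive Fibonacci quotients, and the Cassini identity (Relation \ref{fibrelation}) shows this interval is nonempty precisely when $b>0$, while $b\le 0$ makes the inequalities inconsistent. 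Hence $b>0$ and then $a>0$. Since $|x|>1=|c|$, the ultrametric gives $|x-c|=|x|$, so that $|x_{-1}|=|y|$ and $|y_{-1}|=|x|/|y|$ on all of $\mathsf{M}$.

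With this substitution in hand the inclusions become near-automatic applications of the recurrence $F_k=F_{k-1}+F_{k-2}$. For the base cases I would treat $\mathsf{M}_1=\{|x|>1,\ |y|>1,\ |x|\le|y|\}$ directly: then $|x_{-1}|=|y|>1$ and $|y_{-1}|=|x|/|y|\le 1$, i.e. $f^{-1}(x,y)\in\mathsf{H}$; and for $\mathsf{M}_2=\{|y|>1,\ |y|^2\le|x|\}$ one checks $|x_{-1}|=|y|>1$, $|y_{-1}|=|x|/|y|>1$ and $|x_{-1}|\le|y_{-1}|$, giving $f^{-1}(\mathsf{M}_2)\subset\mathsf{M}_1$ (the $n=0$ instance of the even relation, which is needed to close the chain down to $\mathsf{M}_1$).

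For the inductive steps I would verify $f^{-1}(\mathsf{M}_{2n+2})\subset\mathsf{M}_{2n+1}$ and $f^{-1}(\mathsf{M}_{2n+1})\subset\mathsf{M}_{2n}$ by substituting $|x_{-1}|=|y|$ and $|y_{-1}|=|x|/|y|$ into the three defining inequalities of the target set and collapsing the exponents with $F_{k-1}+F_{k-2}=F_k$. Each target inequality then reduces \emph{exactly} to one of the three hypotheses of the source set, so no inequality is wasted and no auxiliary estimate is needed. The only bookkeeping subtlety is the reindexing $\mathsf{M}_{2n}=\mathsf{M}_{2(n-1)+2}$, which I would carry out explicitly so that the Fibonacci subscripts in the even-indexed definition line up with those produced by the substitution.

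Finally I would assemble the descent: given $(x,y)\in\mathsf{M}_k$, iterating the inclusions yields $f^{-m}(x,y)\in\mathsf{H}$ for some $m\ge 0$; since $\mathsf{H}\subset\mathsf{Q}\setminus\mathcal{K}^-$ by Lemma \ref{lemac1fgh}, the point $f^{-m}(x,y)$ has unbounded backward orbit, and therefore so does $(x,y)$, whence $(x,y)\notin\mathcal{K}^-$. This gives $\mathsf{M}=\bigcup_{i\ge 1}\mathsf{M}_i\subset\mathsf{Q}\setminus\mathcal{K}^-$. I expect the main obstacle to be organizational rather than deep: correctly establishing $|x|>1$ on every $\mathsf{M}_i$ (so that $|x-c|=|x|$) and keeping the even/odd Fibonacci index shifts consistent; once these reductions are in place the dynamical content of each inclusion is a one-line exponent computation.
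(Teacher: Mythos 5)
Your proposal is correct and follows essentially the same route as the paper: substitute $|x_{-1}|=|y|$ and $|y_{-1}|=|x-c|/|y|=|x|/|y|$ into the defining inequalities and collapse exponents via $F_k=F_{k-1}+F_{k-2}$ to shift each $\mathsf{M}_i$ down one index, then descend into $\mathsf{H}$ and invoke Lemma \ref{lemac1fgh}. Your explicit verification that $|x|>1$ on every $\mathsf{M}_i$ (which justifies $|x-c|=|x|$) and your inclusion of the $n=0$ case $f^{-1}(\mathsf{M}_2)\subset\mathsf{M}_1$ are details the paper leaves implicit, and both are genuinely needed to close the argument.
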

\begin{proof} If $(x,y)\in \mathsf{M}_{1}$ then
	$$ |x|>1, \; |y|>1  \textrm{ and } |x|\leq|y|.$$
	Thus,
	$$ |x_{-1}|=|y|> 1 \textrm{ and } |y_{-1}|=\frac{|x|}{|y|}\leq 1$$
	that is, $f^{-1}(x,y)\in \mathsf{H}$.
	
	Now, let $n\geq 1$. If $(x,y)\in \mathsf{M}_{2n+2}$, then
$$|y|^{F_{2n}}>|x|^{F_{2n-1}}, \;  |x|^{F_{2n}}>|y|^{F_{2n+1}} \textrm{ and } |y|^{F_{2n+2}}\leq |x|^{F_{2n+1}}.$$
	Thus,
$$|y|^{F_{2n}}>|x|^{F_{2n-1}} \implies |x_{-1}|^{F_{2n-2}}=|y|^{F_{2n-2}}> \frac{|x|^{F_{2n-1}}}{|y|^{F_{2n-1}}}=|y_{-1}|^{F_{2n-1}},$$ 
$$|x|^{F_{2n}}>|y|^{F_{2n+1}} \implies |y_{-1}|^{F_{2n}}=\frac{|x|^{F_{2n}}}{|y|^{F_{2n}}}>|y|^{F_{2n-1}} =|x_{-1}|^{F_{2n-1}} \; \textrm{ and}$$ 
$$|y|^{F_{2n+2}}\leq |x|^{F_{2n+1}} \implies |x_{-1}|^{F_{2n}}=|y|^{F_{2n}}\leq \frac{|x|^{F_{2n+1}}}{|y|^{F_{2n+1}}}=|y_{-1}|^{F_{2n+1}}$$ 
$f^{-1}(x,y)=(x_{-1},y_{-1})\in \mathsf{M}_{2n+1}$.
	
	Finally, if $(x,y)\in \mathsf{M}_{2n+1}$, in the same way, we may prove that $f^{-1}(x,y)\in \mathsf{M}_{2n}$.
\end{proof}

\begin{remark}
From Remark \ref{fib}, we can rewrite the sets $\mathsf{M}_i$ by
$$\mathsf{M}_{1}:=\{(x,y)\in\mathsf{Q}: 1< |x|\leq |y|\}, \;\; \mathsf{M}_{2}:=\{(x,y)\in\mathsf{Q}: 1 <|y|\leq |x|^{1/2} \},$$
$$\mathsf{M}_{2n+1}:=\{(x,y)\in\mathsf{Q}: |x|^{F_{2n}/F_{2n+1}}\leq |y|< |x|^{F_{2n-2}/F_{2n-1}} \}$$
and
$$\mathsf{M}_{2n+2}:=\{(x,y)\in\mathsf{Q}: |x|^{F_{2n-1}/F_{2n}} <|y|\leq |x|^{F_{2n+1}/F_{2n+2}} \}$$
for all $n\geq 1$.

On the other hand, it is not hard to verify that
$$\mathsf{Q}\setminus (\mathsf{C}_0 \cup \mathsf{F} \cup \mathsf{G} \cup \mathsf{H} \cup \mathsf{M})=\{(x,y)\in \mathsf{Q}: |x|>1 \textrm{ and } |y|=|x|^{1/\beta}\}, $$
where $\beta=\frac{1+\sqrt{5}}{2}$ is the golden ratio,
$$\mathsf{C}_0= \{(x,y)\in \mathsf{Q}: \; \max \{|x|,|y|\}=1\} \; \textrm{ and }\;  \mathsf{M}:=\bigcup_{i=1}^{+\infty}\mathsf{M}_i.$$

Since 
there do not exist $p$-adic numbers $x,y\in \mathbb{Q}_p$ such that  $|y|=|x|^{1/\beta}$, we have constructed a partition of $\mathsf{Q}$ (see Figure \ref{partitioncigual1}) given by
$$\mathsf{Q}= \mathsf{C}_0 \cup \mathsf{F} \cup \mathsf{G} \cup \mathsf{H} \cup \mathsf{M}.$$

Therefore, from Lemmas \ref{lemac1fgh} and \ref{lemac1m} we have ($\mathsf{F} \cup \mathsf{G} \cup \mathsf{H} \cup \mathsf{M})\subset (\mathsf{Q}\setminus \mathcal{K}^- )$ and $\mathcal{K}^- \subset \mathsf{C}_0$, finishing the proof of theorem.
\end{remark}

\begin{figure}[!h]
	\centering
	\includegraphics[scale=0.17]{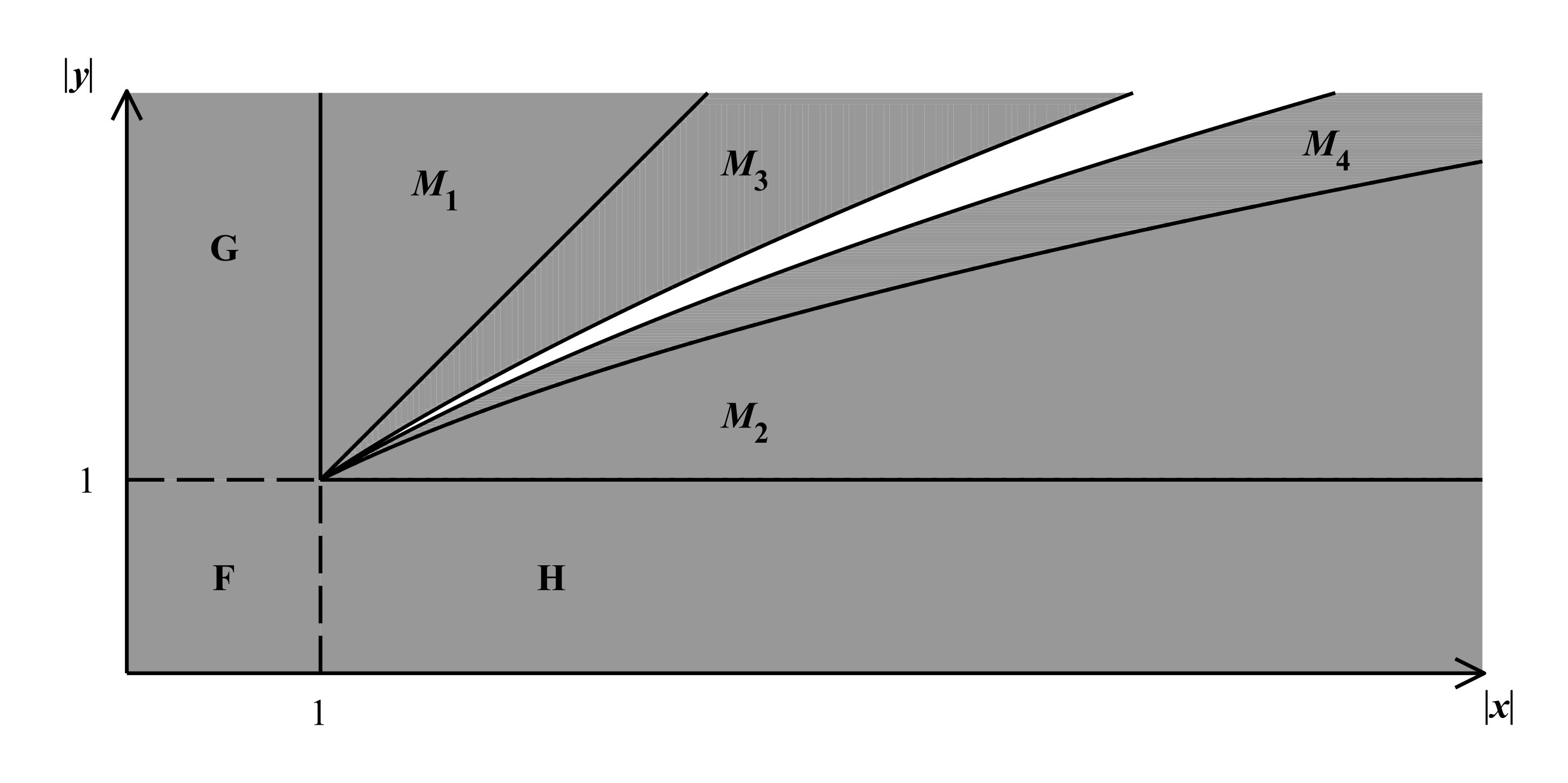}
	\caption{The partition $\mathsf{Q}= \mathsf{C}_0 \cup \mathsf{F} \cup \mathsf{G} \cup \mathsf{H} \cup \mathsf{M}$ in $p$-adic norm, for $|c|=1$.} \label{partitioncigual1}
\end{figure}


\section{Open Problems} \label{open}

In \cite{BCR}, we proved that if $|c|_p < 1$, then a subset of the forward Julia set $\mathcal{K}^+$ of $f$ with infinite Haar measure is the stable manifold of a fixed point of $f$. Moreover, in \cite{BCM} and \cite{C}, the authors studied the backward Julia set $\mathcal{K}^-$ and showed that $\mathcal{K}^-$ is the union of the unstable manifolds of the saddle fixed point and of the $3$-periodic points. These results naturally lead to the following question.

\begin{problem} Is the backward $p$-adic Julia set $\mathcal{K}^-$ an union of unstable manifolds of fixed and periodic points?
\end{problem}

Another interesting question comes from paper \cite{ADP} where the authors proved that their dynamics is topologically conjugate to the two-sided shift map on the space of bisequences in two symbols, which is related to the Smale horseshoe map. Hence, a natural problem is the following.

\begin{problem} For some $c\in\mathbb{Q}_p^2$, is $f$ topologically conjugate to the two-sided shift map on the space of bisequences in two symbols?
\end{problem}

Also, an interesting problem is about the Haar measure of $\mathcal{K}^-$ when $|c|\leq1$. From Theorem \ref{teokmenosmaior}, if $|c|> 1$, then the Haar measure of $\mathcal{K}^-$ is infinity. However, if $|c|\leq1$, then we don't no if $\mathcal{K}^-$ has infinitely many elements (in this case the Haar measure can be positive) or if  $\mathcal{K}^-$ is a finite set (in this case the Haar measure is null). We therefore pose the following problem.

\begin{problem} If $|c|\leq1$, what is the Haar measure of $\mathcal{K}^-$?
\end{problem}

The last interesting problem concerns the filled Julia set $\mathcal{K} = \mathcal{K}^- \cap \mathcal{K}^+$. In \cite{ADP}, the authors considered a class of Hénon maps $H_{a,b}$ depending on two $p$-adic parameters $a,b \in \mathbb{Q}_p$, and established several basic properties of $\mathcal{K}(H_{a,b})$. In particular, they determined, for each pair of parameters $a,b \in \mathbb{Q}_p$, whether $\mathcal{K}(H_{a,b})$ is empty or nonempty, bounded or unbounded, whether it coincides with the unit ball, and, for a certain region of the parameter space, they proved that $\mathcal{K}(H_{a,b})$ is an attractor. This leads us to the following final question.

\begin{problem}
	What properties can be established for the filled Julia set $\mathcal{K} = \mathcal{K}^- \cap \mathcal{K}^+$ of $f$?
\end{problem}

\vspace{1 em}

\noindent {\bf Acknowledgment}: The authors are grateful to Ali Messaoudi for many valuable comments, insightful discussions, and helpful suggestions.


\end{document}